\author{ Christoph Brause\affiliationmark{1,2}
  \and Michael A. Henning\affiliationmark{2}\thanks{Research supported in part by the South African National Research Foundation and the University of Johannesburg}
  \and Marcin Krzywkowski \affiliationmark{2,3}\thanks{Research fellow of the Claude Leon Foundation at the University of Johannesburg. Research partially supported by the Polish National Science Centre grant 2011/02/A/ST6/00201.}  
  }
\title{A characterization of trees with equal $2$-domination and $2$-independence numbers}
\affiliation{
  % one line per affiliation, no postal codes, grant numbers or similar
  Institute of Discrete Mathematics and Algebra, TU Bergakademie Freiberg, Germany \\
  Department of Pure and Applied Mathematics, University of Johannesburg, South Africa\\
  Faculty of Electronics, Telecommunications and Informatics, Gdansk University of Technology, Poland
}
\keywords{$2$-domination, $2$-domination number, $2$-independence, $2$-independence number, tree}
\tikzstyle{snode}=[circle,draw=black,fill=white,thick, inner sep=0pt ,minimum size=1.2mm]
\tikzstyle{gamma2}=[rectangle ,draw=black,inner sep=0pt ,minimum size=3.2mm]
\tikzstyle{bnode}=[circle ,draw=black,fill=black,thick, inner sep=0pt ,minimum size=1.2mm]
\tikzstyle{alpha2}=[diamond,draw=black,thick, inner sep=0pt ,minimum size=3.2mm]
\newcommand{\smallqed}{\hfill{\tiny ($\Box$)}}
\newtheorem{thm}{Theorem}
\newtheorem{obs}[thm]{Observation}
\newtheorem{lem}[thm]{Lemma}
\newtheorem{definition}[thm]{Definition}
\newtheorem{fact}{Fact}[thm]
\newtheorem{claim}[fact]{Claim}
\newenvironment{claimproof}{{\noindent \bfseries Proof:}}{\smallqed}
\newcommand{\gtd}{\gamma_2}
\newcommand{\tind}{\alpha_2}
\newcommand{\cT}{\mathcal{T}}
\newcommand{\cO}{{\cal O}}
\newcommand{\pdi}{{\rm{pdi}}}
\newcommand{\diam}{{\rm diam}}
\begin{document}
\publicationdetails{19}{2017}{1}{1}{1433}
\maketitle
\begin{abstract}
A set $S$ of vertices in a graph $G$ is a $2$-dominating set if every vertex of $G$ not in~$S$ is adjacent to at least two vertices in $S$, and $S$ is a $2$-independent set if every vertex in $S$ is adjacent to at most one vertex of $S$. The $2$-domination number $\gamma_2(G)$ is the minimum cardinality of a $2$-dominating set in $G$, and the $2$-independence number $\alpha_2(G)$ is the maximum cardinality of a $2$-independent set in $G$. Chellali and Meddah [{\it Trees with equal $2$-domination and $2$-independence numbers,} Discussiones Mathematicae Graph Theory 32 (2012), 263--270] provided a constructive characterization of trees with equal $2$-domination and $2$-independence numbers. Their characterization is in terms of global properties of a tree, and involves properties of minimum $2$-dominating and maximum $2$-independent sets in the tree at each stage of the construction. We provide a~constructive characterization that relies only on local properties of the tree at each stage of the construction.
\end{abstract}

\section{Introduction}

We continue the study of $2$-domination and $2$-independence in trees. For $k \ge 1$, a $k$-{\it dominating set} of a graph $G$ is a set $S$ of vertices of $G$ such that every vertex outside $S$ has at least $k$ neighbors in $S$, while $S$ is a $k$-{\it independent set} if every vertex in $S$ is adjacent to at most $k-1$ vertices of $S$. The {\it $k$-domination number} of $G$, denoted by $\gamma_k(G)$, is the minimum cardinality of a $k$-dominating set of $G$, and the {\it $k$-independence number} of $G$, denoted by $\alpha_k(G)$, is the maximum cardinality of a $k$-independent set of $G$. In particular, we note that for $k = 1$, a $1$-dominating set and a $1$-independent set are the classical dominating and independent sets, respectively. Thus, the $1$-domination number of $G$, $\gamma_1(G)$, is the domination number $\gamma(G)$ and the $1$-independence number of $G$, $\alpha_1(G)$, is the independence number $\alpha(G)$. A $k$-dominating set of $G$ of minimum cardinality is called a $\gamma_k(G)$-{\it set}, and a $k$-independent set of $G$ of maximum cardinality is called an $\alpha_k(G)$-{\it set}.

The concepts of $k$-domination and $k$-independence in graphs were introduced by Fink and Jacobson~\cite{fink} in 1985 and is now very well studied in the literature (see for example \cite{caro,chellali1,chellali2,delavina,desormeaux,favaron1,favaron2,favaron3,fujisawa,hansberg}). We refer the reader to the two books on domination by Haynes, Hedetniemi, and Slater \cite{funda1,funda2}, as well as to the excellent survey on $k$-domination and $k$-independence in graphs by Chellali, Favaron, Hansberg, and Volkmann \cite{chellali1}.

Fink and Jacobson \cite{fink} proved that $\gamma_2(G) \le \alpha_2(G)$ for every graph $G$. Recently, Chellali and Meddah \cite{chellali2} gave a constructive characterization of trees $T$ satisfying $\gamma_2(T) = \alpha_2(T)$. Their characterization is in terms of global properties of a tree, and involves properties of minimum $2$-dominating and maximum $2$-independent sets in the tree at each stage of the construction. We provide a constructive characterization that relies only on local properties of the tree at each stage of the construction.

\subsection{Notation}

For notation and graph theory terminology not defined herein, we refer the reader to \cite{HeYe_book}. Let $G$ be a graph with vertex set $V(G)$ of order $n(G) = |V(G)|$ and edge set $E(G)$ of size $m(G) = |E(G)|$. A {\it path} on $n$ vertices is denoted by $P_n$. For two vertices $u$ and $v$ in a connected graph $G$, the {\it distance} $d_G(u,v)$ between $u$ and $v$ is the length of a shortest $(u,v)$-path in $G$. The maximum distance among all pairs of vertices of $G$ is called the {\it diameter} of $G$, which is denoted by $\diam(G)$. A path of length $\diam(G)$ between two vertices at maximum distance apart in $G$ is a {\it diametrical path} of $G$. A vertex of degree one is called a {\it leaf} and its neighbor a {\it support vertex}. We denote the set of leaves of a tree $T$ by $L(T)$. A {\it star} is a tree $K_{1,k}$ for some $k \ge 1$, while for $r,s \ge 1$, a {\it double star} $S_{r,s}$ is a tree with exactly two vertices that are not leaves, one of which is adjacent to $r$ leaves and the other to $s$ leaves.

The {\it open neighborhood} of a vertex $v \in V(G)$ is the set $N_G(v) = \{u \in V(G) \colon uv \in E(G)\}$, and the {\it closed neighborhood of $v$} is $N_G[v] = N_G(v) \cup \{v\}$. The degree of $v$ is $d_G(v) = |N_G(v)|$. The {\it open neighborhood} of a set of vertices $S \subseteq V(G)$ is the set $N_G(S) = \bigcup_{v \in S} N_G(v)$, and the {\it closed neighborhood} of $S$ is $N_G[S] = N_G(S) \cup S$.

For a set $S \subseteq V(G)$, we let $G[S]$ denote the subgraph induced by $S$. The graph obtained from $G$ by removing the vertices of $S$ along with all edges incident to vertices in $S$ is denoted by $G-S$. If $S = \{v\}$, then we simply denote $G-S$ by $G-v$. We define the {\it boundary} of $S$, denoted by $\partial(S)$, to be the set of vertices of $S$ that have a neighbor in $V(G) \setminus S$.

A {\it rooted tree} $T$ distinguishes one vertex $r$ called the {\it root}. For each vertex $v \ne r$ of $T$, the {\it parent} of $v$ is the neighbor of $v$ on the unique $(r,v)$-path, while a {\it child} of $v$ is any other neighbor of $v$. The set of all children of $v$ we denote by $C(v)$. A {\it descendant} of $v$ is a vertex $u \ne v$ such that the unique $(r,u)$-path contains $v$. Thus every child of $v$ is a descendant of $v$. We let $D(v)$ denote the set of all descendants of $v$, and we define $D[v] = D(v) \cup \{v\}$. The {\it maximal subtree} at $v$ is the subtree of $T$ induced by $D[v]$, and it is denoted by $T_v^r$. If the root $r$ is clear from the context, then we simply denote the maximal subtree at $v$ by $T_v$.

\subsection{Known Results}

Fink and Jacobson \cite{fink} proved that $\gamma_2(G) \le \alpha_2(G)$ for every graph $G$, and conjectured that for every graph $G$ and integer $k \ge 1$ we have $\gamma_k(G) \le \alpha_k(G)$. Their conjecture was proven by Favaron \cite{favaron1} by the following stronger result.

\begin{thm}[\cite{favaron1}]\label{t:relate}
For every graph $G$ and integer $k \ge 1$, the graph $G$ contains a set that is both $k$-dominating and $k$-independent, and therefore $\gamma_k(G) \le \alpha_k(G)$.
\end{thm}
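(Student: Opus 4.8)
The plan is to reduce the inequality $\gamma_k(G) \le \alpha_k(G)$ to the single structural assertion that $G$ possesses a set $S$ that is simultaneously $k$-dominating and $k$-independent. Once such an $S$ is in hand, the $k$-dominating property forces $|S| \ge \gamma_k(G)$ while the $k$-independent property forces $|S| \le \alpha_k(G)$, and chaining these yields $\gamma_k(G) \le |S| \le \alpha_k(G)$ together with the existence claim. Hence the entire content lies in producing such an $S$.

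To produce $S$ I would use an extremal (potential-function) argument rather than an iterative construction. For a subset $S \subseteq V(G)$, write $d_{G[S]}(v)$ for the number of neighbours of $v$ lying in $S$, and define the weight $\Phi(S) = k|S| - m(G[S])$. Since $G$ is finite, I would choose $S$ to maximize $\Phi$, and, to break ties, among all maximizers I would take one of smallest cardinality. The two defining properties of $S$ then fall out of the effect of a single vertex move on $\Phi$: adding a vertex $v \notin S$ changes $\Phi$ by $k - d_{G[S]}(v)$, while deleting a vertex $v \in S$ changes $\Phi$ by $d_{G[S]}(v) - k$.

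For $k$-independence, suppose some $v \in S$ had $d_{G[S]}(v) \ge k$. Then $\Phi(S \setminus \{v\}) = \Phi(S) + d_{G[S]}(v) - k \ge \Phi(S)$, so $S \setminus \{v\}$ is again a maximizer but of strictly smaller cardinality, contradicting the choice of $S$; hence every vertex of $S$ has at most $k-1$ neighbours in $S$. For $k$-domination, suppose some $v \notin S$ had $d_{G[S]}(v) \le k-1$. Then $\Phi(S \cup \{v\}) = \Phi(S) + k - d_{G[S]}(v) \ge \Phi(S) + 1 > \Phi(S)$, contradicting maximality of $\Phi(S)$; hence every vertex outside $S$ has at least $k$ neighbours in $S$. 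Thus $S$ is both $k$-independent and $k$-dominating, completing the argument.

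The step I expect to be most delicate is the $k$-independence claim, precisely because of the boundary case $d_{G[S]}(v) = k$: here deletion leaves $\Phi$ unchanged, so maximality of $\Phi$ alone does not rule it out, and it is exactly the secondary minimum-cardinality condition that forces the contradiction. By contrast, the $k$-domination claim uses only maximality, since the relevant move always produces a strict increase. I would therefore impose the tie-break at the outset and verify that deletion genuinely decreases cardinality in every case covered by the first claim.
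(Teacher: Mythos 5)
Your proposal is correct, but note that the paper never proves Theorem~\ref{t:relate} at all: it is stated as a known result with a citation to Favaron's 1985 paper, so there is no in-paper argument to compare against, and you have supplied a self-contained proof where the authors rely on the literature. Your argument is, in essence, the standard Favaron-style extremal proof: maximizing the potential $\Phi(S) = k|S| - m(G[S])$ (with minimum cardinality as a tie-break among maximizers), the two local moves do exactly what you claim --- deleting $v \in S$ changes $\Phi$ by $d_{G[S]}(v) - k$, so any vertex with $k$ or more neighbours in $S$ could be removed without decreasing $\Phi$ while strictly decreasing $|S|$, contradicting the choice of $S$; and adding $v \notin S$ with at most $k-1$ neighbours in $S$ strictly increases $\Phi$, contradicting maximality. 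You correctly isolate the one delicate point, the boundary case $d_{G[S]}(v) = k$, where $\Phi$ alone is insufficient and the cardinality tie-break is what delivers the contradiction; since deletion always reduces $|S|$ by exactly one, the tie-break applies in every case of the independence claim, so there is no gap. The conclusion $\gamma_k(G) \le |S| \le \alpha_k(G)$ then follows immediately from the two properties of $S$, as you say. Two minor remarks: for $v \notin S$ the notation $d_{G[S]}(v)$ is a slight abuse, since $v \notin V(G[S])$; writing $|N_G(v) \cap S|$ would be cleaner. Also, finiteness of $G$ should be invoked (as you implicitly do) to guarantee that a maximizer of $\Phi$ exists. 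Your formulation, optimizing over \emph{all} subsets rather than only over $k$-independent sets, is a clean packaging of the original idea, and it buys a symmetric treatment in which both properties fall out of the same extremal choice.
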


A graph $G$ that satisfies $\gamma_k(G) = \alpha_k(G)$ we call a $(\gamma_k,\alpha_k)$-{\it graph}. Recently, Chellali and Meddah \cite{chellali2} gave a constructive characterization of $(\gamma_2,\alpha_2)$-trees. For this purpose, they defined a family $\mathcal{O}$ of trees $T = T_i$ that can be obtained as follows. Let $\mathcal{O}$ be the family of trees that $T$ that can be obtained from a sequence $T_1,T_2,\ldots,T_k$ ($k \ge 1$) of trees, where $T_1$ is a star $K_{1,p}$ ($p \ge 1$), $T = T_k$, and, if $k \ge 2$, then $T_{i+1}$ is obtained recursively from $T_i$ by one of the following operations:
\begin{itemize}
\begin{item}
\textbf{Operation $\mathcal{R}_1$:} Add a star $K_{1,p}$, $p \ge 2$, centered at a vertex $u$ and join $u$ by an edge to a vertex of $T_i$.
\end{item}
\begin{item}
\textbf{Operation $\mathcal{R}_2$:} Add a double star $S_{1,p}$ with support vertices $u$ and $v$, where $|L_v| = p$ and join $v$ by an edge to a vertex $w$ of $T_i$ with the condition that if $\gamma_2(T_i-w) = \gamma_2(T_i)-1$, then no neighbor of $w$ in $T_i$ belongs to a $\gamma_2(T_i-w)$-set.
\end{item}
\begin{item}
\textbf{Operation $\mathcal{R}_3$:} Add a path $P_2 = u'u$ and join $u$ by an edge to a leaf $v$ of $T_i$ that belongs to every $\alpha_2(T_i)$-set and satisfies in addition $\alpha_2(T_i - v)+1 = \alpha_2(T_i)$.
\end{item}
\begin{item}
\textbf{Operation $\mathcal{R}_4$:} Add a path $P_3 = u'uv$ and join $v$ by an edge to a vertex $w$ that belongs to a $\gamma_2(T_i)$-set and satisfies further $\gamma_2(T_i-w) \le \gamma_2(T_i)$, with the condition that if $\gamma_2(T_i-w) = \gamma_2(T_i)-1$, then no neighbor of $w$ in $T_i$ belongs to a $\gamma_2(T_i-w)$-set.
\end{item}
\end{itemize}

We are now in a position to state the result due to Chellali and Meddah \cite{chellali2}.

\begin{thm}[\cite{chellali2}]\label{known}
A tree $T$ is a $(\gamma_2,\alpha_2)$-tree if and only if $T \cong K_1$ or $T \in \mathcal{O}$.
\end{thm}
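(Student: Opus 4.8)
The plan is to prove the two implications separately, regarding the statement as a characterization of the class $\mathcal{O} \cup \{K_1\}$. For the ``if'' direction I would argue by induction on the number $k$ of trees in a construction sequence $T_1, \ldots, T_k = T$, showing that each operation preserves the equality $\gamma_2 = \alpha_2$. For the ``only if'' direction I would argue by induction on the order $n(T)$, peeling off the last-applied operation from a suitable end of a diametrical path. Throughout, the guiding principle is Theorem~\ref{t:relate}: since $\gamma_2(T) \le \alpha_2(T)$ always holds, a set that is simultaneously $2$-dominating and $2$-independent exists and has cardinality between the two parameters, so $\gamma_2(T) = \alpha_2(T)$ is equivalent to the existence of a single set that is at once a $\gamma_2(T)$-set and an $\alpha_2(T)$-set. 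This reformulation lets me reason about one ``doubly optimal'' set rather than two separate extremal sets.

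For sufficiency, the base case is a star $K_{1,p}$, where the forced membership of all leaves (each leaf has a unique neighbor and so cannot be $2$-dominated unless it lies in the set) gives $\gamma_2(K_{1,p}) = \alpha_2(K_{1,p}) = \max\{p,2\}$. For the inductive step I would compute, for each operation $\mathcal{R}_j$, the increments $\gamma_2(T_{i+1}) - \gamma_2(T_i)$ and $\alpha_2(T_{i+1}) - \alpha_2(T_i)$ and check that they coincide. The central observation is that the newly added leaves are forced into every $2$-dominating set while the newly added support vertices are excluded from every maximum $2$-independent set, so the attached subgraph contributes the same fixed amount to both parameters. The side conditions attached to $\mathcal{R}_2$, $\mathcal{R}_3$, and $\mathcal{R}_4$ are precisely what prevents the attachment point from letting $\alpha_2$ outgrow $\gamma_2$: for instance, the requirement in $\mathcal{R}_4$ that $\gamma_2(T_i - w) \le \gamma_2(T_i)$ and that no neighbor of $w$ lie in a $\gamma_2(T_i - w)$-set guarantees that attaching the path $u'uv$ raises $\gamma_2$ by the same amount as $\alpha_2$.

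For necessity, let $T$ be a $(\gamma_2,\alpha_2)$-tree with $T \not\cong K_1$, and root $T$ at one endpoint $r$ of a diametrical path, letting $v$ denote a support vertex at maximum distance from $r$. The deepest such $v$ has only leaf-children, and the parent/grandparent structure along the diametrical path falls into a bounded number of configurations, each matching the subgraph appended by one of $\mathcal{R}_1$--$\mathcal{R}_4$ (a leaf-star at a vertex, an attached double star, an attached $P_2$, or an attached $P_3$). In each configuration I would delete the appended vertices to obtain a smaller tree $T'$, prove $\gamma_2(T') = \alpha_2(T')$ so that the induction hypothesis places $T' \in \mathcal{O} \cup \{K_1\}$, and then verify that the attachment vertex satisfies the precondition of the corresponding operation, so that $T$ arises from $T'$ by that operation.

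The main obstacle will be the necessity direction, and within it the verification of the technical side conditions on the attachment vertices (membership of $w$ or $v$ in a suitable $\gamma_2$- or $\alpha_2$-set, and the behaviour of $\gamma_2(T_i - w)$ and $\alpha_2(T_i - v)$ under vertex deletion). These are not automatic and must be forced from the hypothesis $\gamma_2(T) = \alpha_2(T)$: I would argue contrapositively that if a required condition failed, then one could either enlarge a maximum $2$-independent set of $T'$ or shrink a minimum $2$-dominating set of $T'$ to produce sets in $T$ witnessing $\gamma_2(T) < \alpha_2(T)$, a contradiction. Managing the interaction of the two parameters under deletion---in particular controlling how removing the attachment vertex shifts $\gamma_2$ by at most one while altering which vertices are forced into extremal sets---is the delicate bookkeeping on which the whole argument rests.
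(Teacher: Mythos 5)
A preliminary but essential point: the paper does not prove this statement. Theorem~\ref{known} is quoted as a known result from Chellali and Meddah~\cite{chellali2}, so there is no in-paper proof to compare your attempt against; the closest analogue is the paper's proof of its own Theorem~\ref{t:tree}, which follows exactly the template you propose --- an increment lemma showing each operation preserves $\alpha_2 - \gamma_2$ (Lemma~\ref{lemma1}), sufficiency by induction on the construction sequence (Lemma~\ref{lem2}), and necessity by induction on order, peeling configurations off the deep end of a longest path (Lemma~\ref{lem3}). So your overall route is the standard and correct one for characterizations of this type, and your reformulation via Theorem~\ref{t:relate} (that $\gamma_2(T)=\alpha_2(T)$ holds iff some set is simultaneously a $\gamma_2(T)$-set and an $\alpha_2(T)$-set) is valid, although both your own subsequent plan and the paper's machinery actually work with two separate extremal sets and increment bookkeeping rather than with a doubly optimal set.

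Two concrete cautions. First, your blanket claim in the sufficiency step that the ``newly added support vertices are excluded from every maximum $2$-independent set'' is false for Operation $\mathcal{R}_3$: after attaching $P_2 = u'u$ to the leaf $v$, the new support vertex $u$ lies together with $u'$ in a $2$-independent set whenever $v$ is vacated, so $\alpha_2(T_{i+1}) \ge \alpha_2(T_i - v) + 2$; it is precisely the side condition ($v$ in every $\alpha_2(T_i)$-set and $\alpha_2(T_i - v) + 1 = \alpha_2(T_i)$) that caps the growth of $\alpha_2$ at $1$ to match the growth of $\gamma_2$, and without it the equality genuinely breaks. Your increment computation must be driven by that condition, not by the exclusion claim. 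Second, the necessity direction is understated: it is not that a bounded number of local configurations each visibly matches one operation, because the preconditions of $\mathcal{R}_2$--$\mathcal{R}_4$ are global (membership of $v$ in every $\alpha_2(T_i)$-set, the value of $\gamma_2(T_i - w)$, and which vertices lie in $\gamma_2(T_i - w)$-sets), and forcing these from the single hypothesis $\gamma_2(T) = \alpha_2(T)$ is the entire substance of the argument rather than a final check. Your contrapositive mechanism (a failed precondition yields witnesses with $\gamma_2(T) < \alpha_2(T)$) is the right idea, but as written the proposal is an outline of the Chellali--Meddah argument with its hardest steps deferred --- and, tellingly, the difficulty of verifying exactly those global conditions is the stated motivation for the present paper's alternative, purely local characterization.
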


\section{Main Result}

The Chellali and Meddah \cite{chellali2} characterization of $(\gamma_2,\alpha_2)$-trees presented in Theorem \ref{known} is a pleasing and important result. However, the characterization is not fully satisfactory in the sense that it is dependant on global properties of the tree at each stage of the construction. For example, in Operation $\mathcal{R}_2$ one needs to check that the tree $T_i$ and the vertex $w$ satisfy the condition that if $\gamma_2(T_i-w) = \gamma_2(T_i)-1$, then no neighbor of $w$ in $T_i$ belongs to a $\gamma_2(T_i-w)$-set. Operations $\mathcal{R}_3$ and $\mathcal{R}_4$ also require to check global properties involving minimum $2$-dominating and maximum $2$-independent sets in the tree. Motivated by the Chellali-Meddah construction of $(\gamma_2,\alpha_2)$-trees, our aim is to obtain a constructive characterization that relies only on local properties of the tree at each stage of the construction.We describe such a family $\cT$ of $(\gtd,\tind)$-trees in Section~\ref{family}. Our main result is the following constructive characterization of $(\gtd,\tind)$-trees. A proof of Theorem~\ref{t:tree} is presented in Section~\ref{S:mainp}.

\begin{thm}\label{t:tree}
A tree is $T$ a $(\gtd,\tind)$-tree if and only if $T \in \cT$.
\end{thm}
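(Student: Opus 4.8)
The plan is to prove both directions of the equivalence in Theorem~\ref{t:tree}, namely that a tree $T$ is a $(\gtd,\tind)$-tree if and only if $T \in \cT$, where $\cT$ is the locally-described family whose construction appears in Section~\ref{family}. Since the statement asserts equality of the class of $(\gtd,\tind)$-trees with a constructively defined family, and since Theorem~\ref{known} already characterizes $(\gtd,\tind)$-trees as $K_1$ together with the family $\mathcal{O}$, I have two natural routes: either prove directly that $T$ is a $(\gtd,\tind)$-tree iff $T \in \cT$, or prove that $\cT$ coincides with $\{K_1\} \cup \mathcal{O}$ and invoke Theorem~\ref{known}. I would pursue the direct approach, because the whole point of the paper is to replace the global conditions of $\mathcal{O}$ by local ones, so a proof built around induction on the construction of $\cT$ using $\gtd(T)=\tind(T)$ as the invariant is cleaner and more informative than a side-by-side comparison of two recursive families.

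For the forward direction ($T \in \cT \Rightarrow \gtd(T)=\tind(T)$), the plan is to induct on the number of operations used to build $T$ from the base tree. The base case is the starting tree of the family, for which one checks $\gtd = \tind$ by hand. For the inductive step, I would assume $T'$ is the tree obtained at the previous stage with $\gtd(T')=\tind(T')$, and that $T$ arises from $T'$ by one of the local operations of $\cT$. For each operation I must show two inequalities: that $\gtd$ increases by exactly the amount that $\tind$ increases. The standard technique is to produce, for the added structure, both a $2$-dominating set extending a $\gtd(T')$-set and a $2$-independent set extending an $\tind(T')$-set, and to argue via a trimming/exchange argument that no $2$-dominating (respectively $2$-independent) set of $T$ can do better on the new vertices than the obvious construction. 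Since by Theorem~\ref{t:relate} we always have $\gtd(T)\le\tind(T)$, it suffices in each case to establish the matching lower bound on $\gtd(T)$ and upper bound on $\tind(T)$, and then combine with the inductive hypothesis.

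For the reverse direction ($\gtd(T)=\tind(T) \Rightarrow T \in \cT$), the plan is to induct on the order $n(T)$ of the tree. Here the key is to root $T$ and examine a diametrical path, locating a vertex deep in the tree whose maximal subtree $T_v$ has a restricted, recognizable shape. The idea is to show that this pendant configuration must be exactly the structure that one of the operations attaches, so that peeling it off yields a smaller tree $T'$ that still satisfies $\gtd(T')=\tind(T')$ and to which the induction hypothesis applies, whence $T' \in \cT$ and therefore $T \in \cT$. The delicate bookkeeping is to verify that removing the pendant structure preserves the equality $\gtd=\tind$; this requires tracking how $\gtd$ and $\tind$ change under deletion of the attached vertices, using again the construction/trimming arguments of the forward direction but run in reverse.

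The hard part will be the reverse direction, and within it the case analysis needed to guarantee that a suitable pendant configuration always exists and corresponds to a legal operation of $\cT$. The main obstacle is ensuring that the local operations of $\cT$ are both \emph{sound} (every tree they build has $\gtd=\tind$) and \emph{complete} (every tree with $\gtd=\tind$ can be deconstructed by them), since the soundness direction forces the operations to be restrictive while completeness forces them to be permissive. I expect the crux to lie in handling the boundary cases where the diametrical-path analysis produces a configuration lying on the border between two operations, and in verifying that the local conditions defining $\cT$ exactly capture the global conditions on $\gtd(T_i-w)$ and $\tind(T_i-v)$ that appear in Operations $\mathcal{R}_2$, $\mathcal{R}_3$, and $\mathcal{R}_4$ of $\mathcal{O}$; reconciling the local and global descriptions at these boundaries is where the real work of the proof will concentrate.
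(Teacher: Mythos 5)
Your plan matches the paper's proof in structure and substance: the forward direction is exactly the paper's Lemma~\ref{lemma1} (each operation $\cO_1,\ldots,\cO_6$ preserves the difference $\tind-\gtd$, established by extension and trimming/exchange arguments) combined with induction as in Lemma~\ref{lem2}, and the reverse direction is the paper's Lemma~\ref{lem3} --- induction on order, rooting $T$ at an endpoint of a longest path, classifying the pendant maximal subtrees $T_w$ (the paper does this via the families $\mathcal{B}_i$ and Claim~\ref{c:lem3.3}), and peeling off a configuration corresponding to an operation while using the operation-invariance lemma in reverse to keep $\gtd=\tind$. Your only slight misdirection is the closing suggestion that the crux involves reconciling $\cT$ with the global conditions of Operations $\mathcal{R}_2$--$\mathcal{R}_4$; the paper's direct proof never references $\mathcal{O}$, with the real work instead lying in the case analysis over the special trees of Figures~\ref{fig1A} and~\ref{fig1B}.
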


\section{The Family $\cT$}\label{family}

In this section, we define a family $\cT$ of $(\gtd,\tind)$-trees. For this purpose, we first define two sets of trees $A = \{T_1,\ldots,T_{15}\}$ and $B = \{B_1,\ldots,B_{10}\}$ shown in Figure~\ref{fig1A} and Figure~\ref{fig1B}, respectively. We call each tree that belongs to $A \cup B$ a \emph{special tree}.

\begin{figure}[h]
\begin{center}
\begin{minipage}{0.19\textwidth}
\centering
\begin{tikzpicture}
\draw(0,0.25)node[bnode]{}--(0.5,0)node[snode]{}--(0,-0.25)node[bnode]{};
\coordinate[label=right:$v$] (A) at (0.5,0);
\end{tikzpicture}
\end{minipage}
\begin{minipage}{0.19\textwidth}
\centering
\begin{tikzpicture}
\draw(-0.5,0)node[bnode]{}--(0,0)node[bnode]{}--(0.5,0)node[snode]{};
\coordinate[label=right:$v$] (A) at (0.5,0);
\end{tikzpicture}
\end{minipage}
\begin{minipage}{0.19\textwidth}
\centering
\begin{tikzpicture}
\draw(-0.5,0)node[bnode]{}--(0,0)node[bnode]{}--(0.5,0)node[bnode]{}--(1,0)node[snode]{};
\coordinate[label=above:$v$] (A) at (0,0);
\end{tikzpicture}
\end{minipage}
\begin{minipage}{0.19\textwidth}
\centering
\begin{tikzpicture}
\draw(0,0.25)node[bnode]{}--(0.5,0)node[snode]{}--(0,-0.25)node[bnode]{}--(-0.5,-0.25)node[bnode]{};
\coordinate[label=left:$v$] (A) at (-0.5,-0.25);
\end{tikzpicture}
\end{minipage}
\begin{minipage}{0.19\textwidth}
\centering
\begin{tikzpicture}
\draw(-0.5,0.25)node[bnode]{}--(0,0)node[snode]{}--(-0.5,-0.25)node[bnode]{}--(-1,-0.25)node[bnode]{}--(-1.5,-0.25)node[bnode]{};
\coordinate[label=right:$v$] (A) at (0,0);
\end{tikzpicture}
\end{minipage}
\\[1em]
\begin{minipage}{0.19\textwidth}
\centering $T_1$
\end{minipage}
\begin{minipage}{0.19\textwidth}
\centering $T_2$
\end{minipage}
\begin{minipage}{0.19\textwidth}
\centering $T_3$
\end{minipage}
\begin{minipage}{0.19\textwidth}
\centering $T_4$
\end{minipage}
\begin{minipage}{0.19\textwidth}
\centering $T_5$
\end{minipage}
\\[2em]
\begin{minipage}{0.24\textwidth}
\centering
\begin{tikzpicture}
\draw(0.5,-0.25)node[snode]{}--(0,-0.25)node[bnode]{}--(-0.5,-0.25)node[bnode]{}--(-1,-0.25)node[bnode]{}--(-1.5,-0.25)node[bnode]{};
\coordinate[label=right:${v=v_2}$](A) at (0.5,-0.25);
\coordinate[label=left:$v_1$] (A) at (-1.5,-0.25);
\end{tikzpicture}
\end{minipage}
\begin{minipage}{0.24\textwidth}
\centering
\begin{tikzpicture}
\draw(0.5,-0.25)node[bnode]{}--(1,-0.25)node[bnode]{}--(1.5,-0.25)node[bnode]{}--(2,-0.25)node[bnode]{}--(2.5,-0.25)node[bnode]{}--(3,-0.25)node[snode]{};
\coordinate[label=above:$v$] (A) at(2,-0.25);
\end{tikzpicture}
\end{minipage}
\begin{minipage}{0.24\textwidth}
\centering
\begin{tikzpicture}
\draw(-0.5,0.25)node[bnode]{}--(0,0)node[snode]{}--(-0.5,-0.25)node[bnode]{}--(-1,-0.25)node[bnode]{}--(-1.5,-0.25)node[bnode]{}--(-2,-0.25)node[bnode]{};
\coordinate[label=right:$v$] (A) at (0,0);
\end{tikzpicture}
\end{minipage}
\begin{minipage}{0.24\textwidth}
\centering
\begin{tikzpicture}
\draw(0.5,-0.25)node[snode]{}--(0,-0.25)node[bnode]{}--(-0.5,-0.25)node[bnode]{}--(-1,-0.25)node[bnode]{}--(-1.5,-0.25)node[bnode]{}--(-2,-0.25)node[bnode]{}--(-2.5,-0.25)node[bnode]{};
\coordinate[label=right:$v$] (A) at (0.5,-0.25);
\end{tikzpicture}
\end{minipage}
\\[1em]
\begin{minipage}{0.24\textwidth}
\centering $T_6$
\end{minipage}
\begin{minipage}{0.24\textwidth}
\centering $T_7$
\end{minipage}
\begin{minipage}{0.24\textwidth}
\centering $T_8$
\end{minipage}
\begin{minipage}{0.24\textwidth}
\centering $T_9$
\end{minipage}
\\[2em]
\begin{minipage}{0.24\textwidth}
\centering
\begin{tikzpicture}
\draw(-1.5,0.25)node[bnode]{}--(-1,0.25)node[bnode]{}--(-0.5,0.25)node[bnode]{}--(0,0)node[snode]{}--(-0.5,-0.25)node[bnode]{}--(-1,-0.25)node[bnode]{}--(-1.5,-0.25)node[bnode]{};
\coordinate[label=right:$v$] (A) at (0,0);
\end{tikzpicture}
\end{minipage}
\begin{minipage}{0.24\textwidth}
\centering
\begin{tikzpicture}
\draw(-0.5,0.25)node[bnode]{}--(0,0.25)node[snode]{}--(-0.5,-0.25)node[bnode]{}--(-1,-0.25)node[bnode]{}--(-1.5,-0.25)node[bnode]{}--(-2,-0.25)node[bnode]{};
\draw(-0.5,0.75)node[bnode]{}--(0,0.25)node[snode]{};
\coordinate[label=left:$v$] (A) at (-2,-0.25);
\end{tikzpicture}
\end{minipage}
\begin{minipage}{0.24\textwidth}
\centering
\begin{tikzpicture}
\draw(-0.5,-0.25)node[snode]{}--(-1,-0.25)node[bnode]{}--(-1.5,-0.25)node[bnode]{}--(-2,-0.25)node[bnode]{}--(-2.5,-0.25)node[bnode]{};
\draw(-1,-0.25)node[bnode]{}--(-1.5,0.25)node[bnode]{};
\draw(-.5,-0.25)node[snode]{}--(-1,0.25)node[bnode]{};
\coordinate[label=above:$v$] (A) at(-1.5,0.25);
\end{tikzpicture}
\end{minipage}
\begin{minipage}{0.24\textwidth}
\centering
\begin{tikzpicture}
\draw(-0.5,-0.25)node[snode]{}--(-1,-0.25)node[bnode]{}--(-1.5,-0.25)node[bnode]{}--(-2,-0.25)node[bnode]{}--(-2.5,-0.25)node[bnode]{};
\draw(-1,-0.25)node[bnode]{}--(-1.5,0.25)node[bnode]{};
\draw(-.5,-0.25)node[snode]{}--(-1,0.25)node[bnode]{};
\coordinate[label=above:$v$] (A) at(-1,0.25);
\end{tikzpicture}
\end{minipage}
\\[1em]
\begin{minipage}{0.24\textwidth}
\centering $T_{10}$
\end{minipage}
\begin{minipage}{0.24\textwidth}
\centering $T_{11}$
\end{minipage}
\begin{minipage}{0.24\textwidth}
\centering $T_{12}$
\end{minipage}
\begin{minipage}{0.24\textwidth}
\centering $T_{13}$
\end{minipage}
\\[2em]
\begin{minipage}{0.49\textwidth}
\centering
\begin{tikzpicture}
\draw(0,-0.25)node[snode]{}--(-0.5,-0.25)node[bnode]{}--(-1,-0.25)node[bnode]{}--(-1.5,-0.25)node[bnode]{}--(-2,-0.25)node[bnode]{}--(-2.5,-0.25)node[bnode]{};
\draw(-1,-0.25)node[bnode]{}--(-1.5,0.25)node[bnode]{};
\draw(-.5,-0.25)node[bnode]{}--(-1,0.25)node[bnode]{};
\coordinate[label=right:${v=v_1}$] (A) at(0,-0.25);
\coordinate[label=above:${v_2}$] (A) at(-0.5,-0.25);
\end{tikzpicture}
\end{minipage}
\begin{minipage}{0.5\textwidth}
\centering
\begin{tikzpicture}
\draw(-1,0.25)node[bnode]{}--(-0.5,0.25)node[bnode]{}--(0,0.25)node[bnode]{}--(0.5,0.25)node[snode]{};
\draw(-1,0.75)node[bnode]{}--(-0.5,.25)node[bnode]{};
\draw(-.5,0.75)node[bnode]{}--(0,.25)node[bnode]{};
\draw(0,0.75)node[bnode]{}--(0.5,.25)node[snode]{};
\draw(0,-0.25)node[bnode]{}--(0.5,.25)node[snode]{};
\coordinate[label=left:$v$] (A) at (-1,0.25);
\end{tikzpicture}
\end{minipage}
\\[1em]
\begin{minipage}{0.49\textwidth}
\centering $T_{14}$
\end{minipage}
\begin{minipage}{0.5\textwidth}
\centering $T_{15}$
\end{minipage}
\end{center}
\caption{The set $A = \{T_1, \ldots, T_{15}\}$ of special trees}\label{fig1A}
\end{figure}
\begin{figure}[h]
\begin{center}
\begin{minipage}{0.10\textwidth}
\centering
\begin{tikzpicture}
\draw(0,0.25)node[snode]{};
\coordinate[label=right:$w$] (A) at (0,0.25);
\end{tikzpicture}
\end{minipage}
\begin{minipage}{0.10\textwidth}
\centering
\begin{tikzpicture}
\draw(-0.5,0.25)node[bnode]{}--(0,0.25)node[snode]{};
\coordinate[label=right:$w$] (A) at (0,0.25);
\end{tikzpicture}
\end{minipage}
\begin{minipage}{0.19\textwidth}
\centering
\begin{tikzpicture}
\draw(-1,0.25)node[bnode]{}--(-0.5,0.25)node[bnode]{}--(0,0.25)node[snode]{};
\coordinate[label=right:$w$] (A) at (0,0.25);
\end{tikzpicture}
\end{minipage}
\begin{minipage}{0.19\textwidth}
\centering
\begin{tikzpicture}
\draw(-2,0.25)node[bnode]{}--(-1.5,0.25)node[bnode]{}--(-1,0.25)node[bnode]{}--(-0.5,0.25)node[snode]{};
\coordinate[label=right:$w$] (A) at (-0.5,0.25);
\end{tikzpicture}
\end{minipage}
\begin{minipage}{0.19\textwidth}
\centering
\begin{tikzpicture}
\draw(-2,0.25)node[bnode]{}--(-1.5,0.25)node[bnode]{}--(-1,0.25)node[bnode]{}--(-0.5,0.25)node[snode]{};
\draw(-1,0.75)node[bnode]{}--(-0.5,.25)node[snode]{};
\coordinate[label=right:$w$] (A) at (-0.5,0.25);
\end{tikzpicture}
\end{minipage}
\begin{minipage}{0.19\textwidth}
\centering
\begin{tikzpicture}
\draw(-2,0.25)node[bnode]{}--(-1.5,0.25)node[bnode]{}--(-1,0.25)node[bnode]{}--(-0.5,0)node[snode]{};
\draw(-2,-0.25)node[bnode]{}--(-1.5,-0.25)node[bnode]{}--(-1,-0.25)node[bnode]{}--(-0.5,0)node[snode]{};
\coordinate[label=right:$w$] (A) at (-0.5,0);
\end{tikzpicture}
\end{minipage}
\\[1em]
\begin{minipage}{0.10\textwidth}
\centering
$B_1$
\centering
\end{minipage}
\begin{minipage}{0.10\textwidth}
\centering
$B_2$
\end{minipage}
\begin{minipage}{0.19\textwidth}
\centering
$B_3$
\end{minipage}
\begin{minipage}{0.19\textwidth}
\centering
$B_4$
\end{minipage}
\begin{minipage}{0.19\textwidth}
\centering
$B_5$
\end{minipage}
\begin{minipage}{0.19\textwidth}
\centering
$B_6$
\end{minipage}
\\[2em]
\begin{minipage}{0.24\textwidth}
\centering
\begin{tikzpicture}
\draw(-2,0.25)node[bnode]{}--(-1.5,0.25)node[bnode]{}--(-1,0.25)node[bnode]{}--(-0.5,0.25)node[bnode]{}--(0,0.25)node[snode]{};
\coordinate[label=right:$w$] (A) at (0,0.25);
\end{tikzpicture}
\end{minipage}
\begin{minipage}{0.24\textwidth}
\centering
\begin{tikzpicture}
\draw(-2,0.25)node[bnode]{}--(-1.5,0.25)node[bnode]{}--(-1,0.25)node[bnode]{}--(-0.5,0.25)node[bnode]{}--(0,0.25)node[snode]{};
\draw(-1,0.75)node[bnode]{}--(-0.5,.25)node[bnode]{};
\draw(-.5,0.75)node[bnode]{}--(0,.25)node[snode]{};
\coordinate[label=right:$w$] (A) at (0,0.25);
\end{tikzpicture}
\end{minipage}
\begin{minipage}{0.24\textwidth}
\centering
\begin{tikzpicture}
\draw(-2.5,0.25)node[bnode]{}--(-2,0.25)node[bnode]{}--(-1.5,0.25)node[bnode]{}--(-1,0.25)node[bnode]{}--(-0.5,0.25)node[bnode]{}--(0,0.25)node[snode]{};
\coordinate[label=right:$w$] (A) at (0,0.25);
\end{tikzpicture}
\end{minipage}
\begin{minipage}{0.24\textwidth}
\centering
\begin{tikzpicture}
\draw(-2,0.25)node[bnode]{}--(-1.5,0.25)node[bnode]{}--(-1,0.25)node[bnode]{}--(-0.5,0.25)node[bnode]{}--(0,0.25)node[bnode]{}--(0.5,0.25)node[snode]{};
\draw(-1,0.75)node[bnode]{}--(-0.5,.25)node[bnode]{};
\draw(-.5,0.75)node[bnode]{}--(0,.25)node[bnode]{};
\draw(0,0.75)node[bnode]{}--(0.5,.25)node[snode]{};
\coordinate[label=right:$w$] (A) at (0.5,0.25);
\end{tikzpicture}
\end{minipage}
\\[1em]
\begin{minipage}{0.24\textwidth}
\centering $B_{7}$
\end{minipage}
\begin{minipage}{0.24\textwidth}
\centering $B_{8}$
\end{minipage}
\begin{minipage}{0.24\textwidth}
\centering $B_{9}$
\end{minipage}
\begin{minipage}{0.24\textwidth}
\centering $B_{10}$
\end{minipage}
\end{center}
\caption{The set $B = \{B_1, \ldots, B_{10}\}$ of special trees}\label{fig1B}
\end{figure}

For each special tree, we $2$-color the vertices with the colors white and black as illustrated in Figures 1 and 2 to indicate the roles they play in the tree. We note that exactly one vertex in each special tree is white. Given a special tree $T$, we denote the set of black vertices by $V_B(T)$. We also specify certain vertices of each special tree $T$, which we name $v(T)$, $v_1(T)$, $v_2(T)$ and $w(T)$. If a special tree $T$ is clear from context, then we simply refer to these specified vertices as $v$, $v_1$, $v_2$, and $w$. We remark that some special trees occur more than once in Figures 1 and 2. However, for simplicity in the proofs that follow, we assign different names to these special trees.

Let $T_{\pdi} \in A \cup B$ be a special tree and let $T$ be a tree. If $T$ contains a subset $U$ of vertices such that $T[U] \cong T_{\pdi}$ and the degree of every black vertex in $V_B(T_{\pdi})$ equals its degree in $T$, then we say that the tree $T$ contains $T_{\pdi}$ as a {\it prescribed-degree-induced subtree}, abbreviated {\it PDI-subtree}. In particular, we note that if $T_{\pdi}$ is a PDI-subtree of a tree $T$, then the degree sequence of the vertices of $V_B(T_{\pdi})$ in $T$ equals the degree sequence of the vertices of $V_B(T_{\pdi})$ in $T_{\pdi}$.

We are now in position to define our family~$\cT$.
\begin{definition}\label{defn1}
Let $\cT$ be the family of trees that:
\begin{enumerate}
\item contains all trees of order at most~$4$,\\ [-1.75em]
\item is closed under the four Operations $\cO_1$, $\cO_2$, $\cO_3$, and $\cO_4$ that are listed below, which extend the tree $T'$ to a tree $T$ by attaching a tree to the vertex $v \in V(T')$, called the \emph{attacher} of~$T'$, and\\ [-1.75em]
\item is closed under the Operations $\cO_5$, and $\cO_6$ listed below, which extend the tree $T'$ to a tree $T$ by attaching trees to the vertices $v_1$ and $v_2$ of $T'$, called the \emph{attachers} of~$T'$.\\ [-1.75em]
\end{enumerate}
\begin{itemize}
\item \textbf{Operation $\cO_1$}: Let $T_{\pdi} \in \{T_1,T_2,T_8\}$ be a PDI-subtree of $T'$ and let $v = v(T_{\pdi})$. Add a new vertex $u$ and the edge $vu$.\\ [-1.75em]
\item \textbf{Operation $\cO_2$}: Let $T_{\pdi} \in \{T_4,T_{11},T_{12},T_{13},T_{15}\}$ be a PDI-subtree of $T'$ and let $v = v(T_{\pdi})$. Add a path $u_1u_2$ to $T'$ and the edge $vu_1$. \\ [-1.75em]
\item \textbf{Operation $\cO_3$}: Let $v$ be an arbitrary vertex of $T'$. Add a path $u_1u_2u_3$ to $T'$ and the edge $vu_2$.\\ [-1.75em]
\item \textbf{Operation $\cO_4$}: Let $T_{\pdi} \in \{T_1,T_2,T_3,T_5,T_6,T_7,T_9,T_{10}\}$ be a PDI-subtree of $T'$ and let $v = v(T_{\pdi})$. Add a path $u_1u_2u_3$ to $T'$ and the edge $vu_1$.\\ [-1.75em]
\item \textbf{Operation $\cO_5$}: Let $T_{\pdi} \cong T_6$ be a PDI-subtree of $T'$ and let $v_1 = v_1(T_{\pdi})$, $v_2 = v_2(T_{\pdi})$. Add a path $u_1u_3$ to $T'$ and the edge $v_1u_1$, and add a new vertex $u_2$ and the edge $v_2u_2$.\\ [-1.75em]
\item \textbf{Operation $\cO_6$}: Let $T_{\pdi} \cong T_{14}$ be a PDI-subtree of $T'$ and let $v_1 = v_1(T_{\pdi})$, $v_2 = v_2(T_{\pdi})$. Remove the edge $v_1v_2$, and add a path $u_1u_2u_3$ and the edges $v_1u_1$ and $v_2u_2$.
\end{itemize}
\end{definition}

For $i \in \{1,2,4,5,6\}$, if $T$ is obtained from $T'$ by applying Operation $\mathcal{O}_i$ to a PDI-subtree $T_{\pdi}$ of $T'$, then we let $X = V(T) \setminus V(T')$ and $T_{\pdi}^{\mathcal{O}_i} = T[V(T_{\pdi}) \cup X]$. Further, we color all vertices of $X$ in $T_{\pdi}^{\mathcal{O}_i}$ black, while the colors of all vertices in the set $V(T_{\pdi})$ remain unchanged.

We shall need the following properties of special trees.

\begin{obs}\label{ob:PDI}
If $T$ is a special tree, then the vertices of $T$ covered by a square in Figures 3 and 4 form a $\gamma_2(T)$-set, and the vertices covered by a diamond form an $\alpha_2(T)$-set.
\end{obs}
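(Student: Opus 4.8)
The plan is to verify Observation~\ref{ob:PDI} by direct inspection, treating it as a finite computation over the $25$ special trees in $A \cup B$ rather than as a structural theorem. Since each special tree is an explicitly drawn small tree (the largest having on the order of seven or eight vertices), I would first compute $\gtd(T)$ and $\tind(T)$ for each $T \in A \cup B$ from first principles, and then check that the squared vertices indeed form a $2$-dominating set of the claimed minimum size and that the diamond-marked vertices form a $2$-independent set of the claimed maximum size.

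The key computational tool I would use throughout is the defining conditions themselves. To show a squared set $S$ is a $\gtd(T)$-set, I would carry out two checks: first, feasibility, that every vertex of $V(T) \setminus S$ has at least two neighbors in $S$ (in a tree a non-$S$ vertex needs two distinct $S$-neighbors, which for leaves is impossible, forcing every leaf and more generally every vertex of degree at most one outside $S$ to lie in $S$ — a useful sanity check); and second, optimality, that no $2$-dominating set is smaller. For the lower bound I would exploit the structure of paths and the leaf/support-vertex constraints: every leaf lies in every $2$-dominating set (a leaf has only one neighbor, so it cannot be $2$-dominated from outside), and along any induced path the spacing of non-$S$ vertices is tightly constrained because each needs two $S$-neighbors. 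Dually, to show a diamond set $D$ is an $\tind(T)$-set I would verify $2$-independence (each vertex of $D$ has at most one neighbor in $D$) and maximality of cardinality, the latter again reducing to a short case analysis on these small trees. Since Theorem~\ref{t:relate} gives $\gtd(T) \le \tind(T)$ for every graph, and since for the special trees we will find $|S| = |D|$, it suffices in practice to pin down one of the two parameters exactly and then confirm that the other marked set has the same cardinality and the required feasibility property.

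In more detail, I would organize the verification by the shape of the underlying tree. The trees $T_1,T_2,B_1,B_2,B_3$ and similar short cases are paths or near-paths, for which $\gtd(P_n)$ and $\tind(P_n)$ have clean closed forms that I would invoke or re-derive; the remaining trees are spiders or caterpillars with a few attached pendant paths, handled by rooting at the high-degree vertex and arguing branch by branch. For each tree I would record the two marked sets and simply confirm the two feasibility checks and the two extremality checks. Because every special tree is displayed with its coloring, this is a bounded and mechanical verification.

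The main obstacle, such as it is, is not conceptual but organizational: it is the sheer number of cases (twenty-five trees, each requiring four small verifications) combined with the risk of an off-by-one error in a cardinality count or a missed vertex in a domination check. The genuinely delicate cases are those where the $\gtd$-set and the $\tind$-set occupy visibly different vertices, since there one must independently confirm both extremal values rather than leaning on equality; the trees with branch vertices of degree three (for instance $T_{10}$ through $T_{15}$ and $B_5,B_6,B_8,B_{10}$) are where a careless lower-bound argument is most likely to slip. To keep the argument honest I would, for each such tree, exhibit an explicit lower-bound certificate for $\gtd$ (a partition of the vertex set into regions each forcing a certain number of dominators) and an explicit upper-bound obstruction for $\tind$ (a collection of edges or short paths limiting how many vertices a $2$-independent set can contain), rather than asserting optimality by inspection alone.
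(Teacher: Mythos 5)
Your overall strategy---treating the observation as a bounded, mechanical verification over the $25$ special trees, certifying for each one the feasibility and the extremality of the two marked sets---is in fact exactly the approach the paper takes: the paper states Observation~\ref{ob:PDI} with no written proof at all, tacitly leaving it as precisely this routine finite check. However, one step in your plan is factually wrong and would fail if executed as written. You claim that ``for the special trees we will find $|S| = |D|$,'' i.e.\ that the square set and the diamond set have equal cardinality, so that Theorem~\ref{t:relate} lets you pin down one of $\gamma_2$ and $\alpha_2$ exactly and transfer to the other by matching cardinalities. The special trees are building blocks of the construction, not themselves $(\gamma_2,\alpha_2)$-trees, and the equality fails in several of them: $T_6 \cong P_5$ has $\gamma_2(T_6) = 3$ but $\alpha_2(T_6) = 4$ (three squares, four diamonds in Figure~3); $T_9 \cong P_7$ has $\gamma_2(T_9) = 4$ and $\alpha_2(T_9) = 5$; and in $T_{14}$ the squares number $5$ while the diamonds number $6$. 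Indeed the closed formulas you propose to invoke, $\gamma_2(P_n) = \lfloor n/2 \rfloor + 1$ and $\alpha_2(P_n) = \lceil 2n/3 \rceil$, already disagree on $P_5$ and $P_7$, so the shortcut collapses on plain paths, not only on exotic cases. Executed literally, your transfer argument would either produce a spurious ``contradiction'' with the figures or an unjustified extremality claim for roughly a third of the special trees.

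The damage is contained because your stated fallback---an explicit lower-bound certificate for $\gamma_2$ (a partition into regions each forcing dominators, plus the fact that every leaf lies in every $2$-dominating set) together with an explicit upper-bound obstruction for $\alpha_2$---is the correct procedure; but it must be applied uniformly to all $25$ trees, not only to the cases you flag as delicate. Your diagnostic for delicacy (branch vertices of degree three, e.g.\ $T_{10}$ through $T_{15}$) mislocates the risk: the two parameters already differ on the path-shaped trees $T_6$ and $T_9$, where you propose to lean on equality. With the $|S| = |D|$ shortcut deleted and both parameters certified independently for every special tree (the path formulas dispose of the path-shaped ones immediately, and the handful of spiders succumb to the leaf-forcing argument), your verification is complete and coincides with what the paper implicitly assumes when it states the observation without proof.
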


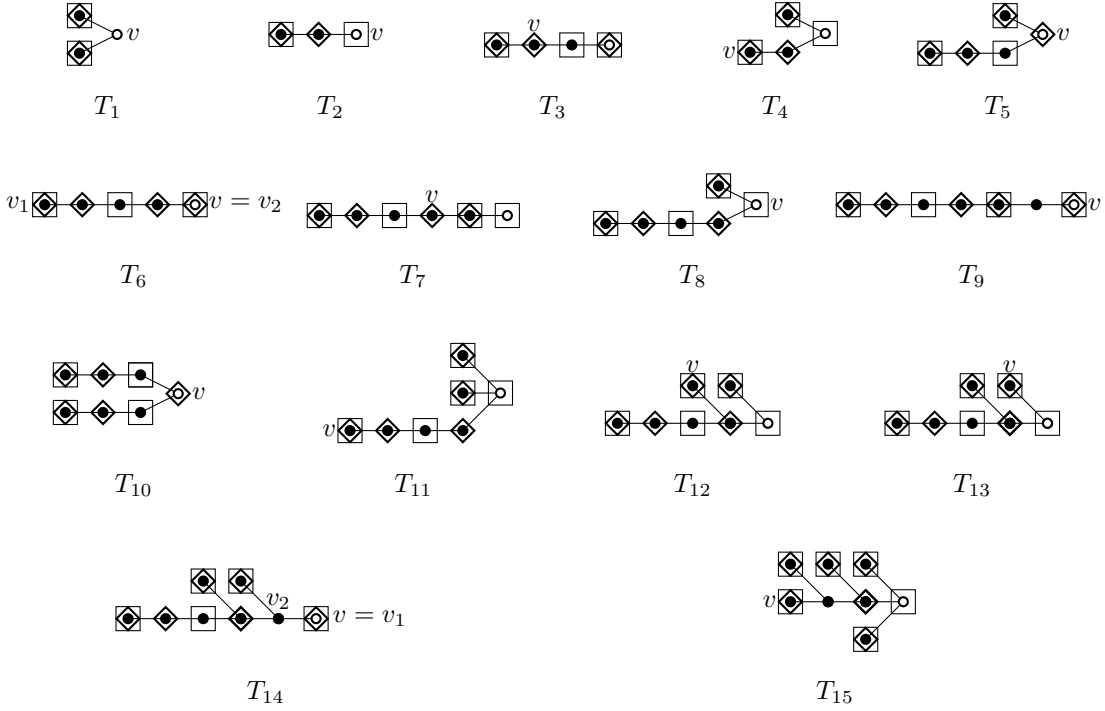
\begin{figure}[h]
\begin{center}
\begin{minipage}{0.19\textwidth}
\centering
\begin{tikzpicture}
\draw(0,0.25)node[bnode]{}node[alpha2]{}node[gamma2]{}--(0.5,0)node[snode]{}--(0,-0.25)node[alpha2]{}node[gamma2]{}node[bnode]{};
\coordinate[label=right:$v$] (A) at (0.5,0);
\end{tikzpicture}
\end{minipage}
\begin{minipage}{0.19\textwidth}
\centering
\begin{tikzpicture}
\draw(-0.5,0)node[alpha2]{}node[gamma2]{}node[bnode]{}--(0,0)node[alpha2]{}node[bnode]{}--(0.5,0)node[snode]{}node[gamma2]{};
\coordinate[label=right:$v$] (A) at (0.56,0);
\end{tikzpicture}
\end{minipage}
\begin{minipage}{0.19\textwidth}
\centering
\begin{tikzpicture}
\draw(-0.5,0)node[alpha2]{}node[gamma2]{}node[bnode]{}--(0,0)node[alpha2]{}node[bnode]{}--(0.5,0)node[bnode]{}node[gamma2]{}--(1,0)node[snode]{}node[alpha2]{}node[gamma2]{};
\coordinate[label=above:$v$] (A) at (0,0.06);
\end{tikzpicture}
\end{minipage}
\begin{minipage}{0.19\textwidth}
\centering
\begin{tikzpicture}
\draw(0,0.25)node[bnode]{}node[alpha2]{}node[gamma2]{}--(0.5,0)node[snode]{}node[gamma2]{}--(0,-0.25)node[bnode]{}node[alpha2]{}--(-0.5,-0.25)node[bnode]{}node[alpha2]{}node[gamma2]{};
\coordinate[label=left:$v$] (A) at (-0.56,-0.25);
\end{tikzpicture}
\end{minipage}
\begin{minipage}{0.19\textwidth}
\centering
\begin{tikzpicture}
\draw(-0.5,0.25)node[bnode]{}node[alpha2]{}node[gamma2]{}--(0,0)node[snode]{}node[alpha2]{}--(-0.5,-0.25)node[bnode]{}node[gamma2]{}--(-1,-0.25)node[bnode]{}node[alpha2]{}--(-1.5,-0.25)node[bnode]{}node[alpha2]{}node[gamma2]{};
\coordinate[label=right:$v$] (A) at (0.06,0);
\end{tikzpicture}
\end{minipage}
\\[1em]
\begin{minipage}{0.19\textwidth}
\centering $T_1$
\end{minipage}
\begin{minipage}{0.19\textwidth}
\centering $T_2$
\end{minipage}
\begin{minipage}{0.19\textwidth}
\centering $T_3$
\end{minipage}
\begin{minipage}{0.19\textwidth}
\centering $T_4$
\end{minipage}
\begin{minipage}{0.19\textwidth}
\centering $T_5$
\end{minipage}
\\[2em]
\begin{minipage}{0.24\textwidth}
\centering
\begin{tikzpicture}
\draw(0.5,-0.25)node[snode]{}node[alpha2]{}node[gamma2]{}--(0,-0.25)node[bnode]{}node[alpha2]{}--(-0.5,-0.25)node[bnode]{}node[gamma2]{}--(-1,-0.25)node[bnode]{}node[alpha2]{}--(-1.5,-0.25)node[bnode]{}node[alpha2]{}node[gamma2]{};
\coordinate[label=right:${v=v_2}$](A) at (0.56,-0.25);
\coordinate[label=left:$v_1$] (A) at (-1.56,-0.25);
\end{tikzpicture}
\end{minipage}
\begin{minipage}{0.24\textwidth}
\centering
\begin{tikzpicture}
\draw(0.5,-0.25)node[bnode]{}node[alpha2]{}node[gamma2]{}--(1,-0.25)node[bnode]{}node[alpha2]{}--(1.5,-0.25)node[bnode]{}node[gamma2]{}--(2,-0.25)node[bnode]{}node[alpha2]{}--(2.5,-0.25)node[bnode]{}node[alpha2]{}node[gamma2]{}--(3,-0.25)node[snode]{}node[gamma2]{};
\coordinate[label=above:$v$] (A) at(2.,-0.19);
\end{tikzpicture}
\end{minipage}
\begin{minipage}{0.24\textwidth}
\centering
\begin{tikzpicture}
\draw(-0.5,0.25)node[bnode]{}node[alpha2]{}node[gamma2]{}--(0,0)node[snode]{}node[gamma2]{}--(-0.5,-0.25)node[bnode]{}node[alpha2]{}--(-1,-0.25)node[bnode]{}node[gamma2]{}--(-1.5,-0.25)node[bnode]{}node[alpha2]{}--(-2,-0.25)node[bnode]{}node[alpha2]{}node[gamma2]{};
\coordinate[label=right:$v$] (A) at (0.06,0);
\end{tikzpicture}
\end{minipage}
\begin{minipage}{0.24\textwidth}
\centering
\begin{tikzpicture}
\draw(0.5,-0.25)node[snode]{}node[alpha2]{}node[gamma2]{}--(0,-0.25)node[bnode]{}--(-0.5,-0.25)node[bnode]{}node[alpha2]{}node[gamma2]{}--(-1,-0.25)node[bnode]{}node[alpha2]{}--(-1.5,-0.25)node[bnode]{}node[gamma2]{}--(-2,-0.25)node[bnode]{}node[alpha2]{}--(-2.5,-0.25)node[bnode]{}node[alpha2]{}node[gamma2]{};
\coordinate[label=right:$v$] (A) at (0.56,-0.25);
\end{tikzpicture}
\end{minipage}
\\[1em]
\begin{minipage}{0.24\textwidth}
\centering $T_6$
\end{minipage}
\begin{minipage}{0.24\textwidth}
\centering $T_7$
\end{minipage}
\begin{minipage}{0.24\textwidth}
\centering $T_8$
\end{minipage}
\begin{minipage}{0.24\textwidth}
\centering $T_9$
\end{minipage}
\\[2em]
\begin{minipage}{0.24\textwidth}
\centering
\begin{tikzpicture}
\draw(-1.5,0.25)node[bnode]{}node[alpha2]{}node[gamma2]{}--(-1,0.25)node[bnode]{}node[alpha2]{}--(-0.5,0.25)node[bnode]{}node[gamma2]{}node[gamma2]{}--(0,0)node[snode]{}node[alpha2]{}--(-0.5,-0.25)node[bnode]{}node[gamma2]{}--(-1,-0.25)node[bnode]{}node[alpha2]{}--(-1.5,-0.25)node[bnode]{}node[alpha2]{}node[gamma2]{};
\coordinate[label=right:$v$] (A) at (0.06,0);
\end{tikzpicture}
\end{minipage}
\begin{minipage}{0.24\textwidth}
\centering
\begin{tikzpicture}
\draw(-0.5,0.25)node[bnode]{}node[alpha2]{}node[gamma2]{}--(0,0.25)node[gamma2]{}--(-0.5,-0.25)node[bnode]{}node[alpha2]{}--(-1,-0.25)node[bnode]{}node[gamma2]{}--(-1.5,-0.25)node[bnode]{}node[alpha2]{}--(-2,-0.25)node[bnode]{}node[alpha2]{}node[gamma2]{};
\draw(-0.5,0.75)node[alpha2]{}node[bnode]{}node[gamma2]{}--(0,0.25)node[snode]{};
\coordinate[label=left:$v$] (A) at (-2.06,-0.25);
\end{tikzpicture}
\end{minipage}
\begin{minipage}{0.24\textwidth}
\centering
\begin{tikzpicture}
\draw(-0.5,-0.25)node[gamma2]{}--(-1,-0.25)node[bnode]{}--(-1.5,-0.25)node[bnode]{}node[gamma2]{}--(-2,-0.25)node[bnode]{}node[alpha2]{}--(-2.5,-0.25)node[bnode]{}node[alpha2]{}node[gamma2]{};
\draw(-1,-0.25)node[bnode]{}node[alpha2]{}--(-1.5,0.25)node[bnode]{}node[alpha2]{}node[gamma2]{};
\draw(-.5,-0.25)node[snode]{}--(-1,0.25)node[bnode]{}node[alpha2]{}node[gamma2]{};
\coordinate[label=above:$v$] (A) at(-1.5,0.31);
\end{tikzpicture}
\end{minipage}
\begin{minipage}{0.24\textwidth}
\centering
\begin{tikzpicture}
\draw(-0.5,-0.25)node[snode]{}--(-1,-0.25)node[bnode]{}node[alpha2]{}--(-1.5,-0.25)node[bnode]{}node[gamma2]{}--(-2,-0.25)node[bnode]{}node[alpha2]{}--(-2.5,-0.25)node[bnode]{}node[alpha2]{}node[gamma2]{};
\draw(-1,-0.25)node[bnode]{}node[alpha2]{}--(-1.5,0.25)node[bnode]{}node[alpha2]{}node[gamma2]{};
\draw(-.5,-0.25)node[snode]{}node[gamma2]{}--(-1,0.25)node[bnode]{}node[alpha2]{}node[gamma2]{};
\coordinate[label=above:$v$] (A) at(-1,0.31);
\end{tikzpicture}
\end{minipage}
\\[1em]
\begin{minipage}{0.24\textwidth}
\centering $T_{10}$
\end{minipage}
\begin{minipage}{0.24\textwidth}
\centering $T_{11}$
\end{minipage}
\begin{minipage}{0.24\textwidth}
\centering $T_{12}$
\end{minipage}
\begin{minipage}{0.24\textwidth}
\centering $T_{13}$
\end{minipage}
\\[2em]
\begin{minipage}{0.49\textwidth}
\centering
\begin{tikzpicture}
\draw(0,-0.25)node[snode]{}node[alpha2]{}node[gamma2]{}--(-0.5,-0.25)node[bnode]{}--(-1,-0.25)node[bnode]{}node[alpha2]{}--(-1.5,-0.25)node[bnode]{}node[gamma2]{}--(-2,-0.25)node[bnode]{}node[alpha2]{}--(-2.5,-0.25)node[bnode]{}node[alpha2]{}node[gamma2]{};
\draw(-1,-0.25)node[bnode]{}node[alpha2]{}--(-1.5,0.25)node[bnode]{}node[alpha2]{}node[gamma2]{};
\draw(-.5,-0.25)node[bnode]{}--(-1,0.25)node[bnode]{}node[alpha2]{}node[gamma2]{};
\coordinate[label=right:${v=v_1}$] (A) at(0.1,-0.25);
\coordinate[label=above:${v_2}$] (A) at(-0.5,-0.25);
\end{tikzpicture}
\end{minipage}
\begin{minipage}{0.5\textwidth}
\centering
\begin{tikzpicture}
\draw(-1,0.25)node[bnode]{}node[alpha2]{}node[gamma2]{}--(-0.5,0.25)node[bnode]{}--(0,0.25)node[bnode]{}node[alpha2]{}--(0.5,0.25)node[gamma2]{};
\draw(-1,0.75)node[bnode]{}node[alpha2]{}node[gamma2]{}--(-0.5,.25)node[bnode]{};
\draw(-.5,0.75)node[bnode]{}node[alpha2]{}node[gamma2]{}--(0,.25)node[bnode]{}node[alpha2]{};
\draw(0,0.75)node[bnode]{}node[alpha2]{}node[gamma2]{}--(0.5,.25);
\draw(0,-0.25)node[bnode]{}node[alpha2]{}node[gamma2]{}--(0.5,.25)node[snode]{};
\coordinate[label=left:$v$] (A) at (-1.06,0.25);
\end{tikzpicture}
\end{minipage}
\\[1em]
\begin{minipage}{0.49\textwidth}
\centering $T_{14}$
\end{minipage}
\begin{minipage}{0.5\textwidth}
\centering $T_{15}$
\end{minipage}
\end{center}
\caption{The set $A = \{T_1,\ldots,T_{15}\}$ of special trees}\label{fig2A}
\end{figure}

\begin{figure}[h]
\begin{center}
\begin{minipage}{0.10\textwidth}
\centering
\begin{tikzpicture}
\draw(0,0.25)node[snode]{}node[alpha2]{}node[gamma2]{};
\coordinate[label=right:$w$] (A) at (0.06,0.25);
\end{tikzpicture}
\end{minipage}
\begin{minipage}{0.10\textwidth}
\centering
\begin{tikzpicture}
\draw(-0.5,0.25)node[bnode]{}node[alpha2]{}node[gamma2]{}--(0,0.25)node[snode]{}node[alpha2]{}node[gamma2]{};
\coordinate[label=right:$w$] (A) at (0.06,0.25);
\end{tikzpicture}
\end{minipage}
\begin{minipage}{0.19\textwidth}
\centering
\begin{tikzpicture}
\draw(-1,0.25)node[bnode]{}node[alpha2]{}node[gamma2]{}--(-0.5,0.25)node[bnode]{}node[alpha2]{}--(0,0.25)node[snode]{}node[gamma2]{};
\coordinate[label=right:$w$] (A) at (0.06,0.25);
\end{tikzpicture}
\end{minipage}
\begin{minipage}{0.19\textwidth}
\centering
\begin{tikzpicture}
\draw(-2,0.25)node[bnode]{}node[alpha2]{}node[gamma2]{}--(-1.5,0.25)node[bnode]{}node[alpha2]{}--(-1,0.25)node[bnode]{}node[gamma2]{}--(-0.5,0.25)node[snode]{}node[alpha2]{}node[gamma2]{};
\coordinate[label=right:$w$] (A) at (-0.44,0.25);
\end{tikzpicture}
\end{minipage}
\begin{minipage}{0.19\textwidth}
\centering
\begin{tikzpicture}
\draw(-2,0.25)node[bnode]{}node[alpha2]{}node[gamma2]{}--(-1.5,0.25)node[bnode]{}node[alpha2]{}--(-1,0.25)node[bnode]{}node[gamma2]{}--(-0.5,0.25)node[alpha2]{};
\draw(-1,0.75)node[bnode]{}node[alpha2]{}node[gamma2]{}--(-0.5,.25)node[snode]{}node[alpha2]{};
\coordinate[label=right:$w$] (A) at (-0.44,0.25);
\end{tikzpicture}
\end{minipage}
\begin{minipage}{0.19\textwidth}
\centering
\begin{tikzpicture}
\draw(-2,0.25)node[bnode]{}node[alpha2]{}node[gamma2]{}--(-1.5,0.25)node[bnode]{}node[alpha2]{}--(-1,0.25)node[bnode]{}node[gamma2]{}--(-0.5,0)node[alpha2]{};
\draw(-2,-0.25)node[bnode]{}node[alpha2]{}node[gamma2]{}--(-1.5,-0.25)node[bnode]{}node[alpha2]{}--(-1,-0.25)node[bnode]{}node[gamma2]{}--(-0.5,0)node[snode]{};
\coordinate[label=right:$w$] (A) at (-0.44,0);
\end{tikzpicture}
\end{minipage}
\\[1em]
\begin{minipage}{0.10\textwidth}
\centering
$B_1$
\centering
\end{minipage}
\begin{minipage}{0.10\textwidth}
\centering
$B_2$
\end{minipage}
\begin{minipage}{0.19\textwidth}
\centering
$B_3$
\end{minipage}
\begin{minipage}{0.19\textwidth}
\centering
$B_4$
\end{minipage}
\begin{minipage}{0.19\textwidth}
\centering
$B_5$
\end{minipage}
\begin{minipage}{0.19\textwidth}
\centering
$B_6$
\end{minipage}
\\[2em]
\begin{minipage}{0.24\textwidth}
\centering
\begin{tikzpicture}
\draw(-2,0.25)node[bnode]{}node[alpha2]{}node[gamma2]{}--(-1.5,0.25)node[bnode]{}node[alpha2]{}--(-1,0.25)node[bnode]{}node[gamma2]{}--(-0.5,0.25)node[bnode]{}node[alpha2]{}--(0,0.25)node[snode]{}node[gamma2]{};
\coordinate[label=right:$w$] (A) at (0.06,0.25);
\end{tikzpicture}
\end{minipage}
\begin{minipage}{0.24\textwidth}
\centering
\begin{tikzpicture}
\draw(-2,0.25)node[bnode]{}node[alpha2]{}node[gamma2]{}--(-1.5,0.25)node[bnode]{}node[alpha2]{}--(-1,0.25)node[bnode]{}node[gamma2]{}--(-0.5,0.25)node[bnode]{}node[alpha2]{}--(0,0.25);
\draw(-1,0.75)node[bnode]{}node[alpha2]{}node[gamma2]{}--(-0.5,.25);
\draw(-.5,0.75)node[bnode]{}node[alpha2]{}node[gamma2]{}--(0,.25)node[snode]{}node[gamma2]{};
\coordinate[label=right:$w$] (A) at (0.06,0.25);
\end{tikzpicture}
\end{minipage}
\begin{minipage}{0.24\textwidth}
\centering
\begin{tikzpicture}
\draw(-2.5,0.25)node[bnode]{}node[alpha2]{}node[gamma2]{}--(-2,0.25)node[bnode]{}node[alpha2]{}--(-1.5,0.25)node[bnode]{}node[gamma2]{}--(-1,0.25)node[bnode]{}node[alpha2]{}--(-0.5,0.25)node[bnode]{}node[alpha2]{}node[gamma2]{}--(0,0.25)node[snode]{}node[gamma2]{};
\coordinate[label=right:$w$] (A) at (0.06,0.25);
\end{tikzpicture}
\end{minipage}
\begin{minipage}{0.24\textwidth}
\centering
\begin{tikzpicture}
\draw(-2,0.25)node[bnode]{}node[alpha2]{}node[gamma2]{}--(-1.5,0.25)node[bnode]{}node[alpha2]{}--(-1,0.25)node[bnode]{}node[gamma2]{}--(-0.5,0.25)node[bnode]{}node[alpha2]{}--(0,0.25)node[bnode]{}--(0.5,0.25)node[gamma2]{};
\draw(-1,0.75)node[bnode]{}node[alpha2]{}node[gamma2]{}--(-0.5,.25);
\draw(-.5,0.75)node[bnode]{}node[alpha2]{}node[gamma2]{}--(0,.25)node[bnode]{};
\draw(0,0.75)node[bnode]{}node[alpha2]{}node[gamma2]{}--(0.5,.25)node[snode]{}node[alpha2]{};
\coordinate[label=right:$w$] (A) at (0.56,0.25);
\end{tikzpicture}
\end{minipage}
\\[1em]
\begin{minipage}{0.24\textwidth}
\centering $B_{7}$
\end{minipage}
\begin{minipage}{0.24\textwidth}
\centering $B_{8}$
\end{minipage}
\begin{minipage}{0.24\textwidth}
\centering $B_{9}$
\end{minipage}
\begin{minipage}{0.24\textwidth}
\centering $B_{10}$
\end{minipage}
\end{center}
\caption{The set $B = \{B_1, \ldots, B_{10}\}$ of special trees}\label{fig2B}
\end{figure}

The following observation follows readily from the facts that in a rooted tree one can construct a minimum $2$-dominating set by ``pushing'' vertices in the direction of the root, in the sense that if we can replace a vertex in a $2$-dominating set by its parent, then we do so; further, we can construct a maximum $2$-independent set by ``pushing'' vertices away from the root as far as possible, in the sense that if we can replace a vertex in a $2$-independent set by its children, then we do so.

\begin{obs}\label{ob:PDIb}
Let $T'$ be a tree that contains a PDI-subtree $T_{\pdi}$, and let $D'$ be a $\gamma_2(T')$-set and $S'$ an $\alpha_2(T')$-set. Let $D_{\pdi} = D' \cap V(T_{\pdi})$ and $D_{\pdi}^B = D' \cap V_B(T_{\pdi})$, and let $S_{\pdi} = S' \cap V(T_{\pdi})$ and $S_{\pdi}^B = S' \cap V_B(T_{\pdi})$. Then the sets $D'$ and $S'$ can be chosen so that the following hold.
\begin{enumerate}
\item[{\rm (a)}]
If $T_{\pdi} \in \{T_1,T_2,T_4,T_8,T_{11},T_{15},B_3,B_7\}$, then the sets $D_{\pdi}$ and $S_{\pdi}$ consist of the square and diamond vertices, respectively, of $T_{\pdi}$ illustrated in Figures 3 and 4.
\item[{\rm (b)}]
If $T_{\pdi} \in \{T_5,T_6,T_9,T_{14},B_1\}$, then the sets $D_{\pdi}$ and $S_{\pdi}^B$ consist of the square and diamond vertices, respectively, of $T_{\pdi}$ illustrated in Figures 3 and 4.
\item[{\rm (c)}]
If $T_{\pdi} \in \{T_{12},T_{13},B_8,B_9,B_{10}\}$, then the sets $D_{\pdi}^B$ and $S_{\pdi}$ consist of the square and diamond vertices, respectively, of $T_{\pdi}$ illustrated in Figures 3 and 4.
\item[{\rm (d)}]
If $T_{\pdi} \in \{T_{10},B_2,B_4,B_5,B_6\}$, then the sets $D_{\pdi}^B$ and $S_{\pdi}^B$ consist of the square and diamond vertices, respectively, of $T_{\pdi}$ illustrated in Figures 3 and 4.
\item[{\rm (e)}]
If $T_{\pdi} \in \{T_3,T_4,T_7,T_{11},T_{12},T_{13},T_{15}\}$ and $v = v(T_{\pdi})$, then $\alpha_2(T_{\pdi}-v) = \alpha_2(T_{\pdi})-1$.
\end{enumerate}
\end{obs}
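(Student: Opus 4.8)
The plan is to prove the five conclusions by starting from an \emph{arbitrary} $\gtd(T')$-set $D'$ and an arbitrary $\tind(T')$-set $S'$, and normalising each by the two exchange principles stated just above the observation, using the defining property of a PDI-subtree to keep every exchange confined to $V(T_{\pdi})$. The essential first step is to record the structural consequence of the prescribed-degree condition: every black vertex of $T_{\pdi}$ has \emph{all} of its neighbours inside $T_{\pdi}$, so the white vertex is the only vertex of $T_{\pdi}$ that can be adjacent to $V(T')\setminus V(T_{\pdi})$. Hence the $2$-domination demand of each black vertex and the $2$-independence constraint of each black vertex are decided entirely within $T_{\pdi}$, and the rest of $T'$ interacts with $T_{\pdi}$ only through its white vertex. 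This insulation is exactly what makes a purely local argument possible.

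Next I would carry out the two normalisations, rooting $T'$ at the white vertex of $T_{\pdi}$. For $D'$ I apply the ``push towards the root'' operation: while some non-root vertex of $D'$ can be replaced by its parent with the set remaining $2$-dominating, do so. Since a black leaf of $T_{\pdi}$ is a leaf of $T'$, it cannot leave a $2$-dominating set (outside $D'$ it would need two neighbours in $D'$ but has only one); these forced leaves pin the boundary, and the push then drives the induced pattern on $V_B(T_{\pdi})$ to coincide with the square set of Figures~\ref{fig2A} and~\ref{fig2B}, where Observation~\ref{ob:PDI} certifies that these squares form a $\gtd(T_{\pdi})$-set, so no smaller pattern is attainable. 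Dually, for $S'$ I apply the ``push away from the root'' operation together with the remark that a black leaf may always be moved into a maximum $2$-independent set once its support is excluded; this drives the induced pattern on $V_B(T_{\pdi})$ to the diamond set. These normalisations already deliver the black-vertex content of parts (a)--(d), namely the assertions about $D_{\pdi}^B$ and $S_{\pdi}^B$.

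It then remains to pin down the white vertex in those cases whose conclusion refers to $D_{\pdi}$ or $S_{\pdi}$ rather than to the black-only versions. Here I argue by a single exchange at the white vertex $w$: if the prescribed pattern asks for $w\notin D'$ while the normalised set contains $w$, then $w$ is already $2$-dominated by its black neighbours, so one deletes $w$ and, should this leave a neighbour of $w$ short of one dominator, reinserts that neighbour; the insulation guarantees that this repair touches only $N_{T'}(w)$ and never propagates into $V(T')\setminus V(T_{\pdi})$. Symmetrically, if $w$ has two black neighbours lying in $S'$, then $w$ is automatically excluded from the $2$-independent set; and when the pattern asks for $w\in D'$, inserting $w$ can only help dominate the rest of $T'$ and hence preserves minimality. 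The four families sort themselves out precisely according to whether the domination adjustment, the independence adjustment, both, or neither succeeds at $w$, giving (a), (b), (c), and (d). I would finish this stage by verifying, tree by tree over the fifteen trees of $A$ and the ten trees of $B$, that the displayed square and diamond sets are stable under the two pushes and that $w$ falls into the asserted case; each such check is finite and local on a tree of bounded order.

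For part (e) I would reduce to showing that $v=v(T_{\pdi})$ lies in \emph{every} maximum $2$-independent set of $T_{\pdi}$. Granting this, any $2$-independent set of $T_{\pdi}-v$ avoids $v$ and is $2$-independent in $T_{\pdi}$ as well, hence has at most $\tind(T_{\pdi})-1$ vertices, while deleting $v$ from a maximum $2$-independent set containing it yields one of exactly that size in $T_{\pdi}-v$; together these give $\tind(T_{\pdi}-v)=\tind(T_{\pdi})-1$, and for each of $T_3,T_4,T_7,T_{11},T_{12},T_{13},T_{15}$ the required membership of $v$ is a direct inspection. The step I expect to be the main obstacle is the white-vertex interface of the previous paragraph: one must realise the local square and diamond patterns at $w$ simultaneously with global optimality of $D'$ and $S'$ in a tree whose structure away from $T_{\pdi}$ is unknown. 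The prescribed-degree insulation is precisely the tool that tames this, since every needed insertion or deletion at $w$ is a swap among the neighbours of $w$ and cannot leak into the rest of $T'$; but confirming, case by case, that the swap actually succeeds (and succeeds for the sets required of $D'$ and $S'$) is where the genuine work lies, whereas the black part of every assertion is essentially forced and requires no choice.
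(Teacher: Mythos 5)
Your strategy is essentially the paper's own: the paper never writes out a proof of this observation, offering only the preceding remark that it ``follows readily'' from the two pushing principles (push a minimum $2$-dominating set toward the root, push a maximum $2$-independent set away from it) together with the per-tree data certified in Observation~\ref{ob:PDI}, and your insulation argument (black vertices of a PDI-subtree keep all their neighbours inside it, so the host tree meets $T_{\pdi}$ only through the white vertex) plus rooting at the white vertex and normalising is exactly that sketch made explicit. For parts (a)--(d) this plan works, with one small slip: when the square pattern demands $w \in D'$ you cannot simply ``insert'' $w$ into a minimum $2$-dominating set and claim minimality is preserved -- insertion strictly increases the cardinality; the correct move is the swap you use elsewhere, replacing a black neighbour of $w$ by $w$ itself, which the leaf-forcing and the prescribed degrees make available in each of the relevant trees.

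The genuine failure is in part (e). Your reduction is correct -- indeed $\alpha_2(T_{\pdi}-v)=\alpha_2(T_{\pdi})-1$ holds if and only if $v$ lies in every $\alpha_2(T_{\pdi})$-set -- but the announced ``direct inspection'' cannot succeed for $T_3$ and $T_7$, because the property is false there as the trees are drawn. In $T_3 \cong P_4$ with vertices $a,v,b,c$ in order ($c$ white, $v$ the neighbour of the leaf $a$), the set $\{a,b,c\}$ is $2$-independent of size $3=\alpha_2(T_3)$ and avoids $v$; equivalently, $T_3-v \cong K_1 \cup P_2$ has $\alpha_2(T_3-v)=3=\alpha_2(T_3)$, not $\alpha_2(T_3)-1$. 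Likewise in $T_7 \cong P_6$, where $v$ is the third vertex from the white end, $T_7-v \cong P_3 \cup P_2$ gives $\alpha_2(T_7-v)=4=\alpha_2(P_6)$, and the four vertices consisting of the $P_2$-component together with the two leaf-end vertices of the $P_3$-component form a maximum $2$-independent set of $T_7$ avoiding $v$. So no proof attempt can close these two cases; the defect lies in the paper's statement rather than in your method. It is worth recording that the paper only ever invokes part (e) in Case 2 of the proof of Lemma~\ref{lemma1}, i.e.\ for the Operation-$\mathcal{O}_2$ trees $T_4, T_{11}, T_{12}, T_{13}, T_{15}$, and for those five trees your inspection does verify that $v$ belongs to every maximum $2$-independent set, so your argument establishes everything the paper actually uses.
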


\section{Proof of Theorem \ref{t:tree}}\label{S:mainp}

In this section, we will prove our main result, namely Theorem \ref{t:tree}.  We first will present some preliminary results that we will need for our proof.

\begin{obs}\label{obs1}
Every leaf of a graph $G$ is in every $\gamma_2(G)$-set and there is an $\alpha_2(G)$-set containing all leafs of $G$.
\end{obs}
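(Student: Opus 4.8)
The statement to prove, Observation~\ref{obs1}, makes two claims about $\gamma_2$- and $\alpha_2$-sets of a graph $G$, each concerning the role of leaves. I will treat the two claims separately since they go in opposite directions.

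The plan for the first claim is a direct contradiction argument. Suppose $D$ is a $\gamma_2(G)$-set and let $\ell$ be a leaf of $G$ with $\ell \notin D$. By definition of a $2$-dominating set, every vertex outside $D$ must have at least two neighbors in $D$. But $\ell$ has only one neighbor in $G$ (namely its support vertex), so $\ell$ can have at most one neighbor in $D$, contradicting the requirement that it have at least two. Hence every leaf lies in $D$, and since $D$ was an arbitrary minimum $2$-dominating set, every leaf is in every $\gamma_2(G)$-set. This direction is essentially immediate from the definitions and requires no construction.

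The plan for the second claim is a straightforward exchange (or ``pushing'') argument to produce a single $\alpha_2(G)$-set containing all leaves. Start with an arbitrary $\alpha_2(G)$-set $S$ and suppose some leaf $\ell$ is not in $S$; let $u$ be its support vertex. The idea is to show that we may modify $S$ to include $\ell$ without decreasing its cardinality and without destroying $2$-independence. If $u \notin S$, then $\ell$ has no neighbor in $S$, so $S \cup \{\ell\}$ is still $2$-independent and strictly larger, contradicting maximality; thus in fact $u \in S$ for any absent leaf. In that case one replaces $S$ by $(S \setminus \{u\}) \cup \{\ell\}$: this keeps the cardinality unchanged, and since $\ell$ has only the single neighbor $u$ (now removed), $\ell$ is adjacent to no vertex of the new set, while removing $u$ can only reduce the number of $S$-neighbors of every other vertex, so $2$-independence is preserved. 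Repeating this exchange over all leaves not initially in $S$ (each exchange swaps a leaf into the set) yields an $\alpha_2(G)$-set containing all leaves.

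The only point requiring slight care, and the step I would write out most carefully, is verifying that the exchange step genuinely preserves $2$-independence after swapping, and that the iteration terminates without undoing earlier swaps. Since each exchange removes a support vertex $u$ and adds its leaf $\ell$, I should confirm that a vertex added at one step is never removed at a later step: a leaf $\ell$ added to the set is adjacent only to $u$, which has already been deleted, so $\ell$ is never the support vertex demanded by a subsequent exchange, and the process is monotone in the number of leaves contained. This guarantees that after finitely many steps we reach a maximum $2$-independent set containing $L(G)$, completing the proof.
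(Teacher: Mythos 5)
Your proof is correct. Note first that the paper itself offers no proof of this observation --- it is stated as folklore and used silently --- so there is no ``paper route'' to deviate from; your write-up simply supplies the missing details, and both halves are sound. The first claim is indeed immediate: a vertex of degree one outside a $2$-dominating set cannot have two neighbors in it. For the second claim, your exchange argument works, but two small remarks. The termination bookkeeping can be avoided entirely by an extremal choice: among all $\alpha_2(G)$-sets pick one, $S$, maximizing $|S \cap L(G)|$; a single application of your exchange step then yields an immediate contradiction if some leaf is missing, with no iteration to control. Second, your stated reason that an added leaf is never removed later (``adjacent only to $u$, which has already been deleted'') is slightly imprecise: deletion of $u$ from $S$ does not remove it from the graph, and a leaf $\ell$ \emph{can} be a support vertex --- precisely when its unique neighbor $u$ also has degree one, i.e.\ in a $K_2$ component, where your procedure would in principle cycle by swapping $\ell$ and $u$ back and forth. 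That configuration never actually arises, because in a $K_2$ component a maximum $2$-independent set must contain both vertices (if one is absent, adding it gives each endpoint at most one neighbor in the set, contradicting maximality --- your own first-case argument, extended to allow $u \in S$ with no other $S$-neighbor). With that one sentence added, or with the extremal-choice formulation, the argument is airtight.
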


We now prove that performing Operations $\mathcal{O}_1,\mathcal{O}_2,\ldots,\mathcal{O}_6$ maintains the difference between the $2$-domination and the $2$-independence numbers.

\begin{lem}\label{lemma1}
If $T$ is obtained from an arbitrary tree $T'$ by applying one of the Operations $\mathcal{O}_i$ for some $i \in [6]$, then $\alpha_2(T)-\gamma_2(T) = \alpha_2(T')-\gamma_2(T')$.
\end{lem}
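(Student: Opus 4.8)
The plan is to treat each of the six operations separately and, for each, to identify a constant $c_i$ (one checks readily that $c_1 = c_2 = 1$ and $c_3 = c_4 = c_5 = c_6 = 2$) and to prove the two equalities $\gamma_2(T) = \gamma_2(T') + c_i$ and $\alpha_2(T) = \alpha_2(T') + c_i$; the desired identity $\alpha_2(T)-\gamma_2(T) = \alpha_2(T')-\gamma_2(T')$ then follows at once. Here $c_i$ is the number of vertices of $X = V(T)\setminus V(T')$ that an optimal set is forced to contain, and in the case of $\mathcal{O}_6$ one keeps track additionally of the deleted edge $v_1v_2$ and the newly added edges.

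For the constructive inequalities $\gamma_2(T) \le \gamma_2(T') + c_i$ and $\alpha_2(T) \ge \alpha_2(T') + c_i$, I would start from optimal sets $D'$ and $S'$ of $T'$ chosen, via Observation~\ref{ob:PDIb}, so that their traces on the prescribed PDI-subtree $T_{\pdi}$ are exactly the square and diamond vertices (subject to the black-vertex restrictions in parts (a)--(d)). The list of trees $T_{\pdi}$ admitted by each operation is engineered precisely so that the attacher behaves correctly: in particular the attacher is a square vertex, hence lies in the chosen $\gamma_2$-set, which lets me extend $D'$ to a $2$-dominating set of $T$ by adding only the forced leaves of $X$ (Observation~\ref{obs1}), the remaining new vertices being $2$-dominated through the attacher. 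Dually, I extend $S'$ by the leaves of $X$, using that the attacher, being a diamond vertex, blocks exactly the right new vertices from the $2$-independent set. Operation $\mathcal{O}_3$ is the exception in that its attacher is arbitrary, but there the middle vertex $u_2$ of the attached $P_3$ is automatically $2$-dominated by its two leaves and never needs to enter the dominating set, and symmetrically is excluded from any maximum $2$-independent set, so both parameters rise by $2$ for every $T'$.

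For the reverse inequalities $\gamma_2(T) \ge \gamma_2(T') + c_i$ and $\alpha_2(T) \le \alpha_2(T') + c_i$, I would take optimal sets $D$ and $S$ of $T$, restrict them to $V(T')$, and repair them near the attacher. The forced leaves of $X$ already contribute $c_i$ to $|D|$ and to $|S|$ by Observation~\ref{obs1}, so the point is to show that $D \cap V(T')$ can be converted into a $2$-dominating set of $T'$, and $S \cap V(T')$ into a $2$-independent set of $T'$, without any further increase in size. Here the prescribed degrees of the black vertices of $T_{\pdi}$ (the defining feature of a PDI-subtree) together with the ``pushing'' principle underlying Observation~\ref{ob:PDIb} are essential: they allow me to relocate vertices of $D$ and of $S$ within $T_{\pdi}$ so that the attacher is $2$-dominated from inside $T'$ (for $D$), and so that deleting the attacher costs exactly one independent vertex (for $S$), the latter quantified by Observation~\ref{ob:PDIb}(e), which supplies $\alpha_2(T_{\pdi}-v) = \alpha_2(T_{\pdi})-1$ for the trees occurring in $\mathcal{O}_2$ and $\mathcal{O}_4$.

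I expect these reverse inequalities to be the main obstacle, since they require showing that the freshly attached structure cannot be exploited either to $2$-dominate $T'$ more cheaply or to enlarge a $2$-independent set of $T'$ — exactly the global phenomenon that the PDI conditions are designed to preclude by purely local data. Operation $\mathcal{O}_6$ will need extra attention because it deletes the edge $v_1v_2$ before attaching the path $u_1u_2u_3$: the vertex $v_2$, which is \emph{not} a square vertex of $T_{14}$, loses one of its two dominators in $T'$ and must be re-dominated by $u_2$, so the bookkeeping of which new vertices are forced differs from the other operations and has to be verified against the explicit $2$-domination and $2$-independence data for $T_{14}$ recorded in Observation~\ref{ob:PDI} and Figures 3 and 4. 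Once all six operations are checked, the lemma follows.
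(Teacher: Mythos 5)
Your plan coincides with the paper's own proof: a six-way case analysis showing that $\gamma_2$ and $\alpha_2$ each increase by exactly the same constant $c_i$ (namely $1$ for $\mathcal{O}_1,\mathcal{O}_2$ and $2$ for $\mathcal{O}_3$--$\mathcal{O}_6$), with the upper/lower bounds obtained exactly as you describe — extending optimal sets of $T'$ whose traces on $T_{\pdi}$ are fixed by Observation~\ref{ob:PDIb}, forcing leaves via Observation~\ref{obs1}, normalizing optimal sets of $T$ near the attachment by local exchanges, and giving $\mathcal{O}_6$ separate bookkeeping for the deleted edge $v_1v_2$ and the re-domination of $v_2$ through $u_2$. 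One detail to correct: the paper uses Observation~\ref{ob:PDIb}(e) only for $\mathcal{O}_2$; for $\mathcal{O}_4$ it is neither valid for all admitted trees (e.g.\ $\alpha_2(T_1-v)=\alpha_2(T_1)$, and $T_1,T_2,T_5,T_6,T_9,T_{10}$ are absent from the list in (e)) nor needed, since after the exchange forcing $S\cap\{u_1,u_2,u_3\}=\{u_2,u_3\}$ one deletes $u_2,u_3$ without ever removing the attacher $v$.
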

\begin{proof}
Let $c = \alpha_2(T')-\gamma_2(T')$. By Theorem \ref{t:relate}, we have $c \ge 0$. We prove that $\alpha_2(T)-\gamma_2(T) = c$. Let $D'$ be a $\gamma_2(T')$-set and let $S'$ be an $\alpha_2(T')$-set. Let $v$ be the attacher in $T'$ if $T$ is obtained from $T'$ using Operation $\mathcal{O}_1$, $\mathcal{O}_2$, $\mathcal{O}_3$, or $\mathcal{O}_4$, and let $v_1$ and $v_2$ be the attachers in $T'$ if $T$ is obtained from $T'$ using Operation $\mathcal{O}_5$ or $\mathcal{O}_6$. If $T$ is obtained from $T'$ by Operation $\mathcal{O}_1$, $\mathcal{O}_2$, $\mathcal{O}_4$, $\mathcal{O}_5$, or $\mathcal{O}_6$, then let $T_{\pdi}$ be the PDI-subtree of $T'$ used to construct the tree $T$, where $v = v(T_{\pdi})$, $v_1 = v_1(T_{\pdi})$ and $v_2 = v_2(T_{\pdi})$. Further, let $D_{\pdi} = D' \cap V(T_{\pdi})$ and $D_{\pdi}^B = D' \cap V_B(T_{\pdi})$, and let $S_{\pdi} = S' \cap V(T_{\pdi})$ and $S_{\pdi}^B = S' \cap V_B(T_{\pdi})$. By Observation \ref{ob:PDIb}, the sets $D'$ and $S'$ can be chosen so that properties (a)-(e) in the statement of the observation hold. Let $D$ be a $\gamma_2(T)$-set and let $S$ be an $\alpha_2(T)$-set. We consider six cases, depending on the operation applied to $T'$ in order to obtain the tree $T$. In all
cases, we show that $\alpha_2(T)-\gamma_2(T) = c$.

\textit{Case 1. $T$ is obtained from $T'$ by Operation $\mathcal{O}_1$.}
In this case, $T_{\pdi} \in \{T_1,T_2,T_8\}$. Let $u$ be the vertex added to $T'$ and $uv$ be the edge added to $T'$ to obtain $T$. By Observation \ref{obs1}, we have $u \in D$. By Observation \ref{ob:PDIb}(a), the sets $D$ and $S$ can be chosen so that $D \cap V(T_{\pdi})$ and $S \cap V(T_{\pdi})$ are the sets of square and diamond vertices, respectively, of $T_{\pdi}$ illustrated in Figure 3, noting that in this case the vertex $v = v(T_{\pdi})$ is the white vertex of $T_{\pdi}$. This implies that either $v \in D$, or $v \notin D$ and $v$ is dominated twice by the vertices of $D \setminus \{u\}$. In both cases, the set $D \setminus \{u\}$ is a $2$-dominating set of $T'$. Therefore, $\gamma_2(T') \le \gamma_2(T)-1$. It is easy to observe that every $2$-dominating set of $T'$ can be extended to a $2$-dominating set of $T$ by adding to it the vertex $u$, implying that $\gamma_2(T) \le \gamma_2(T')+1$. Consequently, $\gamma_2(T) = \gamma_2(T')+1$. Further, we note that $v \notin S$. Therefore $u \in S$, and $S \setminus \{u\}$ is a $2$-independent set of $T'$ implying that $\alpha_2(T') \ge \alpha_2(T)-1$. By Observation \ref{ob:PDIb}(a) we note that the vertex $v$ does not belong to the $\alpha_2(T')$-set $S'$. Thus, $S'$ can be extended to a $2$-independent set of $T$ by adding to it the vertex $u$, implying that $\alpha_2(T) \ge \alpha_2(T')+1$. Consequently, $\alpha_2(T) = \alpha_2(T')+1$. Thus, $\alpha_2(T)-\gamma_2(T) = \alpha_2(T')-\gamma_2(T') = c$.

\textit{Case 2. $T$ is obtained from $T'$ by Operation $\mathcal{O}_2$.}
In this case, $T_{\pdi} \in \{T_4,T_{11},T_{12},T_{13},T_{15}\}$. Let $u_1u_2$ be the path added to $T'$ and $vu_1$ the edge added to $T'$ to obtain $T$. Since $u_3$ is a leaf of $T$, we note that $u_3 \in D$. By Observation \ref{ob:PDIb}(a) and \ref{ob:PDIb}(c), both sets $D'$ and $S'$ contain the vertex $v = v(T_{\pdi})$. The set $D' \cup \{u_2\}$ and $S' \cup \{u_2\}$ are therefore $2$-dominating and $2$-independent sets, respectively, of $T$, implying that $\gamma_2(T) \le |D'|+1 = \gamma_2(T')+1$ and $\alpha_2(T) \ge |S'|+1 = \alpha_2(T')+1$. We now consider the sets $D$ and $S$. Necessarily, $u_2 \in D$. If $u_1 \in D$, then we can replace it with the vertex $v$. If $u_1 \notin D$, then $v \in D$ in order to dominate $u_1$ twice. Hence, we may choose $D$ so that $D \cap \{v,u_1,u_2\} = \{v,u_2\}$. Therefore, $D \setminus \{u_2\}$ is a $2$-dominating set in $T'$, and so $\gamma_2(T') \le \gamma_2(T)-1$. Consequently, $\gamma_2(T) = \gamma_2(T')+1$. We can always choose $S$ so that $u_2 \in S$. If $u_1 \notin S$, then $S \setminus \{u_1\}$ is a $2$-independent set in $T'$, implying that $\alpha_2(T') \ge |S|-1 = \alpha_2(T)-1$. Suppose that $u_1 \in S$. Then, $v \notin S$, and so $S \setminus \{u_1,u_2\}$ is a $2$-independent set of $T'-v$. By Observation \ref{ob:PDIb}(e), $\alpha_2(T') = \alpha_2(T'-v)+1 \ge (|S|-2)+1 = |S|-1 = \alpha_2(T)-1$. In both cases, $\alpha_2(T') \ge \alpha_2(T)-1$. Consequently, $\alpha_2(T) = \alpha_2(T')+1$. Thus, $\alpha_2(T)-\gamma_2(T) = \alpha_2(T')-\gamma_2(T') = c$.

\textit{Case 3. $T$ is obtained from $T'$ by Operation $\mathcal{O}_3$.}
In this case, the attacher $v$ is an arbitrary vertex of $T'$. Let $u_1u_2u_3$ be the path added to $T'$ and $vu_2$ the edge added to $T'$ to obtain $T$. Since $u_1$ and $u_3$ are leaves of $T$, we note that $\{u_1,u_3\} \subset D$. If $u_2 \in D$, then we can simply replace $u_2$ in $D$ by the vertex $v$. Hence, we may assume that $u_2 \notin D$. The set $D \setminus \{u_1,u_3\}$ is therefore a $2$-dominating set of $T'$, and so $\gamma_2(T') \le \gamma_2(T)-2$. Every $2$-dominating set of $T'$ can be extended to a $2$-dominating set of $T$ by adding to it the leaves $u_1$ and $u_3$, implying that $\gamma_2(T) \le \gamma_2(T')+2$. Consequently, $\gamma_2(T) = \gamma_2(T')+2$. Every $2$-independent set of $T'$ can be extended to a $2$-independent set of $T$ by adding to it the leaves $u_1$ and $u_3$, implying that $\alpha_2(T) \ge \alpha_2(T')+2$. Suppose that $u_2 \in S$. Then, at most one of $u_1$ and $u_3$ belong to $S$. Renaming $u_1$ and $u_3$ if necessary, we may assume that $u_1 \notin S$. In this case, we can simply replace $u_2$ in $S$ with $u_1$. Hence, we may assume that $u_2 \notin S$, and so $\{u_1,u_3\} \subset S$. The set $S \setminus \{u_1,u_3\}$ is therefore a $2$-independent set of $T'$, and so $\alpha_2(T') \ge \alpha_2(T)-2$. Consequently, $\alpha_2(T) = \alpha_2(T')+2$. Thus, $\alpha_2(T)-\gamma_2(T) = \alpha_2(T')-\gamma_2(T') = c$.

\textit{Case 4. $T$ is obtained from $T'$ by Operation $\mathcal{O}_4$.}
In this case, $T_{\pdi} \in \{T_1,T_2,T_3,T_5,T_6,T_7$, $T_9,T_{10}, T_{14}\}$. Let $P \colon u_1u_2u_3$ be the path added to $T'$ and $vu_1$ the edge added to $T'$ to obtain $T$. Every $2$-dominating set of $T'$ can be extended to a $2$-dominating set of $T$ by adding to it vertices $u_2$ and $u_3$, implying that $\gamma_2(T) \le \gamma_2(T')+2$. Since $u_3$ is a leaf of $T$, we have $u_3 \in D$. If $u_2 \in D$, then we can simply replace $u_2$ in $D$ by $u_1$. If $u_2 \notin D$, then $u_1 \in D$ in order to dominate the vertex $u_2$ twice. Thus, we may assume that $D \cap \{u_1,u_2,u_3\} = \{u_1,u_3\}$. Suppose $T_{\pdi} \in \{T_1,T_2,T_5,T_6,T_9,T_{10},T_{14}\}$. By Observation \ref{ob:PDIb}(a), \ref{ob:PDIb}(b), and \ref{ob:PDIb}(d), the set $D$ can be chosen so that $D \cap V(T_{\pdi})$ are the square vertices of $T_{\pdi}$ illustrated in Figure 3, noting that in this case the vertex $v = v(T_{\pdi})$ is the white vertex of $T_{\pdi}$. This implies that either $v \in D$ or $v \notin D$ and $v$ is dominated twice by vertices of $D \setminus \{u_1,u_3\}$. In both cases, the set $D \setminus \{u_1,u_3\}$ is a $2$-dominating set of $T'$. Therefore, $\gamma_2(T') \le |D|-2 = \gamma_2(T)-2$. Consequently, $\gamma_2(T) = \gamma_2(T')+2$. Suppose $T_{\pdi} \in\{T_3,T_7\}$. If $v \notin D$, then since the two neighbors of $v$ in $T_{\pdi}$ have degree at most $2$, the set $D$ must contain both neighbors of $v$ in $T_{\pdi}$, for otherwise a neighbor of $v$ not in $D$ would not be dominated twice by vertices of $D$, a contradiction. Therefore, either $v \in D$ or $v\notin D$ and both neighbors of $v$ belong to $D$. In both cases, the set $D \setminus \{u_1,u_3\}$ is a $2$-dominating set of $T'$. Therefore, $\gamma_2(T') \le \gamma_2(T)-2$. Consequently, $\gamma_2(T) = \gamma_2(T')+2$. If $u_1 \in S$, then at most one of $u_2$ and $u_3$ belong to $S$, and in this case we can simply replace the two vertices of $P$ that belong to $S$ with the vertices $u_2$ and $u_3$. If $u_1 \notin S$, then $\{u_2,u_3\} \subset S$. Hence, we may assume that $S \cap \{u_1,u_2,u_3\} = \{u_2,u_3\}$. The set $S \setminus \{u_2,u_3\}$ is therefore a $2$-independent set of $T'$, and so $\alpha_2(T') \ge \alpha_2(T)-2$. Every $2$-independent set of $T'$ can be extended to a $2$-independent set of $T$ by adding to it the vertices $u_2$ and $u_3$, implying that $\alpha_2(T) \ge \alpha_2(T')+2$. Consequently, $\alpha_2(T) = \alpha_2(T')+2$. Thus, $\alpha_2(T)-\gamma_2(T) = \alpha_2(T')-\gamma_2(T') = c$.

\textit{Case 5. $T$ is obtained from $T'$ by Operation $\mathcal{O}_5$.}
 In this case, $T_{\pdi} \cong T_6$. Let $u_1u_3$ be the path added to $T'$ and $u_2$ the new vertex added to $T'$, and let $v_1u_1$ and $v_2u_2$ be the two edges added to $T'$ to obtain $T$. Since $u_2$ and $u_3$ are leaves of $T$, we note that $\{u_2,u_3\} \subset D$. If $u_1 \in D$, then we can simply replace $u_1$ in $D$ by $v_1$. If $u_1 \notin D$, then $v_1 \in D$ in order to dominate $u_1$ twice. Therefore, we can choose $D$ so that $D \cap \{u_1,u_2,u_3,v_1\} = \{u_2,u_3,v_1\}$. Let $v_1w_1w_2w_3v_2$ be the $(v_1,v_2)$-path. If $w_1 \in D$, we can replace $w_1$ in $D$ by $w_2$. If $w_1 \notin D$, then $w_2 \in D$ in order to dominate $w_1$ twice. Hence, we can choose $D$ so that $D \cap \{w_1,w_2\} = \{w_2\}$. If $w_3 \in D$, we can replace $w_3$ in $D$ by $v_2$. If $w_3 \notin D$, then $v_2 \in D$ in order to dominate $w_3$ twice. Hence, we can choose $D$ so that $D \cap \{w_3,v_2\} = \{v_2\}$. The set $D \setminus \{u_2,u_3\}$ is therefore a $2$-dominating set of $T'$, implying that $\gamma_2(T') \le |D| - 2 = \gamma_2(T)-2$. By Observation \ref{ob:PDIb}(b), the set $D'$ contains the vertex $v_1$, and can therefore be extended to a $2$-dominating set of $T$ by adding to it the leaves $u_2$ and $u_3$, implying that $\gamma_2(T) \le |D'|+2 = \gamma_2(T')+2$. Consequently, $\gamma_2(T) = \gamma_2(T')+2$. If $v_1 \in S$, then at most one of $u_1$ and $u_3$ belong to $S$, and in this case we can replace the vertices in the set $\{u_1,u_3,v_1\}$ that belong to $S$ with the vertices $u_1$ and $u_3$. If $v_1 \notin S$, then $\{u_1,u_3\} \subset S$. Hence, we may assume that $S \cap \{u_1,u_3,v\} = \{u_1,u_3\}$. If $w_1 \notin S$, then $w_3 \in S$ and we can replace $w_3$ in the set $S$ with $w_1$. Hence, we may assume that $w_1 \in S$. If $w_2 \notin S$, then $w_3 \in S$ and we can replace $w_3$ in the set $S$ with $w_2$. Hence, we may assume that $w_2 \in S$ and $w_3 \notin S$. If $u_2 \notin S$, then $v_2 \in S$ and we can replace $v_2$ in the set $S$ with $u_2$. Hence, we may assume that $u_2 \in S$. With these assumptions, we note that both cases $v_2 \in S$ and $v_2 \notin S$ may possibly occur. However in both cases, the set $(S \setminus \{u_1,u_2,u_3,w_2\}) \cup \{v_1,w_3\}$ is a $2$-independent set of $T'$, and so $\alpha_2(T') \ge |S|-2 = \alpha_2(T)-2$. By Observation \ref{ob:PDIb}(b), we note that $S' \cap \{v_1,w_1,w_2,w_3\} = \{v_1,w_1,w_3\}$. We note that both cases $v_2 \in S'$ and $v_2 \notin S'$ may possibly occur. However in both cases, the set $(S' \setminus \{v_1,w_3\}) \cup \{u_1,u_2,u_3,w_2\}$ is a $2$-independent set of $T$, implying that $\alpha_2(T) \ge |S'|+2 = \alpha_2(T')+2$. Consequently, $\alpha_2(T) = \alpha_2(T')+2$. Thus, $\alpha_2(T)-\gamma_2(T) = \alpha_2(T')-\gamma_2(T') = c$.

\textit{Case 6. $T$ is obtained from $T'$ by Operation $\mathcal{O}_6$.}
In this case, $T_{\pdi} = T_{14}$. Let $u_1u_2u_3$ be the path added to $T'$, and let $v_1u_1$ and $v_2u_2$ be the two edges added to $T'$ to obtain $T$. Since $u_3$ is a leaf of $T$, we note that $u_3 \in D$. Further, since $(V(T_{\pdi})\setminus {v_1}) \cup \{u_2\}$ induces a PDI-subgraph $T_{14}$, $u_2 \in D$ by Observation \ref{ob:PDIb}. In order to dominate some vertex $u_1$, $u_1 \in D$, $v_1 \in D$ twice. We can replace $u_1$ in the set $D$ by $v_1$ if $u_1 \in D$. Hence, we may assume that $v_1\in D$. But now, $v_1$ dominates $v_2$ in $T'$. Therefore, $D\setminus\{u_2,u_3\}$ is a $2$-dominating set of $T'$ and $\gamma_2(T') \le |D|-2 = \gamma_2(T)-2$. By Observation \ref{ob:PDIb}(b), $v_1 \in D'$. Now, $D' \cup \{u_2,u_3\}$ is a $2$-dominating set of $T$, implying $\gamma_2(T) \le \gamma_2(T')+2$. Consequently, $\gamma_2(T) = \gamma_2(T')+2$. We note that by definition, $|S \cap \{u_1,u_2,u_3\}| \le 2$. Hence, $S \setminus \{u_1,u_2,u_3\}$ is a $2$-independent set of $T'$, and further $\alpha_2(T') \ge \alpha_2(T)-2$. By Observation \ref{ob:PDIb}(b), we note that $v_2\notin S'$. Therefore, $S' \cup \{u_2,u_3\}$ is a $2$-independent set of $T$, implying $\alpha_2(T) \ge |S'|+2 = \alpha_2(T')+2$. Consequently, $\alpha_2(T) = \alpha_2(T')+2$. Thus, $\alpha_2(T)-\gamma_2(T) = \alpha_2(T')-\gamma_2(T') = c$.
\end{proof}

We are now in position to prove the following lemma.

\begin{lem}\label{lem2}
Every tree of the family $\cT$ is a $(\gamma_2,\alpha_2)$-tree.
\end{lem}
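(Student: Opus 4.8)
The plan is to prove Lemma~\ref{lem2} by structural induction on the construction of a tree $T \in \cT$, using Lemma~\ref{lemma1} as the inductive engine. The quantity to track is the difference $\alpha_2(T) - \gamma_2(T)$, and the goal is to show this difference is always $0$, since $T$ being a $(\gamma_2,\alpha_2)$-tree means precisely $\gamma_2(T) = \alpha_2(T)$.

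\textbf{Base case.} By Definition~\ref{defn1}(i), the family $\cT$ contains all trees of order at most $4$. First I would verify directly that every such tree satisfies $\gamma_2 = \alpha_2$. This is a finite check: for each of $K_1$, $P_2$, $P_3$, $P_4$, $K_{1,3}$, and the remaining small trees, one computes both parameters. For instance, in a path $P_n$ or small star, every leaf lies in each $\gamma_2$-set (Observation~\ref{obs1}), and one confirms the minimum $2$-dominating set and maximum $2$-independent set have equal size. Since there are only a handful of trees on at most $4$ vertices, this base case is routine and can be dispatched by inspection, establishing $\alpha_2(T) - \gamma_2(T) = 0$ for all base trees.

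\textbf{Inductive step.} Suppose $T \in \cT$ is not a base tree. By Definition~\ref{defn1}(ii)--(iii), $T$ is obtained from some tree $T' \in \cT$ by applying one of the Operations $\cO_1, \ldots, \cO_6$. By the induction hypothesis, $T'$ is a $(\gamma_2,\alpha_2)$-tree, so $\alpha_2(T') - \gamma_2(T') = 0$. Now I invoke Lemma~\ref{lemma1}, which asserts that applying any Operation $\cO_i$ preserves the difference: $\alpha_2(T) - \gamma_2(T) = \alpha_2(T') - \gamma_2(T')$. Combining these gives $\alpha_2(T) - \gamma_2(T) = 0$, i.e.\ $\gamma_2(T) = \alpha_2(T)$, so $T$ is a $(\gamma_2,\alpha_2)$-tree. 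This completes the induction and the proof.

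\textbf{Main obstacle.} The genuinely hard work—verifying that each of the six operations preserves the difference—has already been carried out in Lemma~\ref{lemma1}, whose proof is a delicate case analysis relying on the structural Observations~\ref{ob:PDI} and~\ref{ob:PDIb} about the square ($\gamma_2$-set) and diamond ($\alpha_2$-set) vertices of the special trees. Given that lemma, the proof of Lemma~\ref{lem2} itself is essentially a clean induction and presents no serious difficulty. The only point requiring a little care is ensuring the base case is complete: one must confirm that \emph{every} tree of order at most $4$ genuinely satisfies $\gamma_2 = \alpha_2$, rather than silently assuming it. I would therefore state the induction cleanly, cite Lemma~\ref{lemma1} for the inductive step, and record the finite base-case verification explicitly so the argument is self-contained.
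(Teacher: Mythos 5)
Your proposal is correct and matches the paper's proof in essence: the paper likewise verifies the order-at-most-$4$ base case and then applies Lemma~\ref{lemma1} to the last tree $T' = T_{k-1}$ in a construction sequence, phrased as induction on the order $n$ rather than structural induction on the construction (the two are equivalent here since every operation strictly increases the order, which guarantees well-foundedness). No gaps; your explicit finite check of the base case is if anything slightly more careful than the paper's bare assertion.
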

\begin{proof}
We proceed by induction on the order $n \ge 1$ of a tree $T \in \cT$. Every tree of order at most $4$ is a $(\gamma_2,\alpha_2)$-tree. This establishes the base. For the inductive hypothesis, let $n \ge 5$ and assume that every tree of order less than $n$ that belongs to the family $\cT$ is a $(\gamma_2,\alpha_2)$-tree. Let $T$ be a tree of order $n$ that belongs to the family $\cT$. Then there exists a sequence $T_0,T_1,\ldots,T_k$ of trees such that $T_0$ is a tree of order at most $4$, $T_k = T$, and for $i \in [k]$, the tree $T_i$ can be obtained from the tree $T_{i-1}$ by one of the Operations $\mathcal{O}_1,\mathcal{O}_2,\ldots,\mathcal{O}_6$. Let $T' = T_{k-1}$ and note that $T' \in \cT$ and $T'$ has order less than $n$. Applying the inductive hypothesis to the tree $T'$, we have that $T'$ is a $(\gamma_2,\alpha_2)$-tree. Thus, $\gamma_2(T') = \alpha_2(T')$. The tree $T$ can be obtained from $T'$ by applying one of the Operations $\mathcal{O}_1$, $\mathcal{O}_2$, $\mathcal{O}_3$, $\mathcal{O}_4$, $\mathcal{O}_5$, or $\mathcal{O}_6$. Therefore, by Lemma \ref{lemma1}, $\alpha_2(T)-\gamma_2(T) = \alpha_2(T')-\gamma_2(T') = 0$; or, equivalently, $\gamma_2(T) = \alpha_2(T)$. Therefore, $T$ is a $(\gamma_2,\alpha_2)$-tree.
\end{proof}

\begin{lem}\label{lem3}
Every $(\gamma_2,\alpha_2)$-tree belongs to the family $\cT$.
\end{lem}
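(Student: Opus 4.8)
The plan is to prove the converse direction by induction on the order $n$ of a $(\gamma_2,\alpha_2)$-tree $T$. If $n \le 4$, then $T \in \cT$ directly by the first clause of Definition~\ref{defn1}, which settles the base case. So I would assume $n \ge 5$ and that every $(\gamma_2,\alpha_2)$-tree of smaller order lies in $\cT$. The entire task then reduces to exhibiting a smaller tree $T'$ together with a PDI-subtree $T_{\pdi}$ of $T'$ for which $T$ is obtained from $T'$ by one of the Operations $\cO_1,\ldots,\cO_6$ applied at $T_{\pdi}$. Once such a reduction is found, Lemma~\ref{lemma1} forces $\alpha_2(T')-\gamma_2(T') = \alpha_2(T)-\gamma_2(T) = 0$, so $T'$ is again a $(\gamma_2,\alpha_2)$-tree; the inductive hypothesis gives $T' \in \cT$, and since $\cT$ is closed under each of the six operations, we conclude $T \in \cT$. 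The virtue of this setup is that it decouples the combinatorics (finding the reduction) from the extremal bookkeeping (which is already handled by Lemma~\ref{lemma1}).

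To locate the reduction I would fix a diametrical path $v_0v_1\cdots v_d$ of $T$ and root $T$ at $v_0$, so that $v_d$ is a leaf of maximum depth. Because the path is longest, every subtree hanging below $v_{d-2}$ has depth at most two: the vertex $v_{d-1}$ has only leaf-children, and every child of $v_{d-2}$ other than $v_{d-1}$ is a leaf or a support vertex whose children are all leaves. The heart of the argument is then an exhaustive case analysis of these local configurations, organised by the degree of $v_{d-1}$, the number of leaf neighbours of $v_{d-1}$ and $v_{d-2}$, and the number and type of the remaining children of $v_{d-2}$ (and, when forced, of $v_{d-3}$). In each admissible case I would read off the candidate operation from the shape of the appended part $X$—a single leaf for $\cO_1$, a $P_2$ for $\cO_2$, a $P_3$ for $\cO_3$ and $\cO_4$, and the two-attacher gadgets for $\cO_5$ and $\cO_6$ (the last reversed by deleting $X$ and \emph{reinstating} the edge $v_1v_2$)—set $T' = T - X$, and then verify that the vertices of $T'$ meeting $X$ induce one of the special trees $T_{\pdi}$ with precisely the prescribed degrees, so that $T_{\pdi}$ is a genuine PDI-subtree of $T'$. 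Observation~\ref{ob:PDI} supplies the designated square and diamond vertices needed to confirm the degree pattern at each black vertex.

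The essential work, and the main obstacle, is to rule out every local configuration that does \emph{not} match an operation, for here the hypothesis $\gamma_2(T)=\alpha_2(T)$ must do the ruling out. For such a configuration I would construct an explicit $2$-independent set and an explicit $2$-dominating set of $T$ whose sizes differ, contradicting the equality. Concretely, Observation~\ref{obs1} places all leaves in every $\gamma_2(T)$-set and in some $\alpha_2(T)$-set, while Observation~\ref{ob:PDIb} controls how optimal sets intersect any PDI-subtree; combining a minimum $2$-dominating set of $T-X$ with the forced values on $X$, and dually a maximum $2$-independent set, lets me compare $\gamma_2(T)$ and $\alpha_2(T)$ directly. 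A bad configuration (for instance, an overly long pendant path, or a support vertex carrying the "wrong" number of leaves) will always leave strict slack $\alpha_2(T) > \gamma_2(T)$—typically because one can enlarge a maximum $2$-independent set by a pendant vertex or short pendant path without any matching saving forced on the $2$-dominating side—so equality excludes it.

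Finally, I would dispose of the small-diameter cases separately. When $\diam(T)=2$ the tree is a star $K_{1,n-1}$ with $n\ge 5$, which reduces by $\cO_1$ applied to a $T_1$-PDI-subtree centred at the star's centre; and when $\diam(T)$ is small the relevant configurations coincide with the special trees of Figures~1 and~2, which furnish the terminal templates against which the appended gadgets are matched. Assembling all the cases shows that some operation always applies to a $(\gamma_2,\alpha_2)$-tree of order at least five, which completes the induction. I expect the bulk of the write-up—and the only genuinely delicate part—to be the exhaustiveness of this case analysis together with the per-case verification that the retained special tree really appears with its prescribed degrees; the reductions themselves and the slack computations are routine once the correct configuration is isolated.
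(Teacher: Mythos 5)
Your skeleton is the paper's skeleton: induction on $n$ with base $n\le 4$, reduction to a smaller tree $T'$ whose deficiency is preserved by Lemma~\ref{lemma1}, and slack contradictions of the form $|D'|<|S'|$ with $\partial(U)\cap S'=\emptyset$ (the paper's Fact~\ref{fact3}). But there is a genuine gap in the combinatorial heart. You peg the case analysis at the configurations around $v_{d-1}$, $v_{d-2}$ ``and, when forced, $v_{d-3}$,'' and this is too shallow for two reasons. First, the templates that license the operations are deep: Operation $\mathcal{O}_4$ must be applied at the white vertex of trees such as $T_9$, $T_{10}$ or $T_{14}$, Operation $\mathcal{O}_2$ at $T_{15}$, and the $\mathcal{O}_5$/$\mathcal{O}_6$ gadgets sit on $T_6$ and $T_{14}$ --- structures extending five or six levels below the attacher --- so their presence in $T'=T-X$ as PDI-subtrees (with the prescribed degrees at every black vertex) cannot be certified from the depth-three neighbourhood you enumerate. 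Second, your assertion that every non-matching configuration ``will always leave strict slack $\alpha_2(T)>\gamma_2(T)$'' is precisely the content that needs proof, and it is false as a purely local statement at depth three: whether a given pendant configuration forces slack depends on structure further down. In the paper's analysis, for example, a depth-5 branch of type $B_7$ hanging from a degree-2 vertex produces a new admissible branch type $B_9$, while the superficially similar type $B_8$ in the same position yields a slack contradiction; a depth-three window cannot distinguish these cases.

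The missing idea is the paper's Claim~\ref{c:lem3.3}: a secondary induction on $d(w,\ell(w))$ showing that for \emph{every} large vertex $w$, the maximal subtree $T_w$ is a PDI-subtree belonging to a short, depth-indexed list of branch types ($\mathcal{B}_0=\{B_1\},\ldots,\mathcal{B}_5=\{B_9,B_{10}\}$, and $\mathcal{B}_i=\emptyset$ for $i\ge 6$), or else $T\in\cT$. This bottom-up classification is what makes exhaustiveness checkable: at depth $k$ the children of $w$ are already known to carry branches from $\mathcal{B}_0\cup\cdots\cup\mathcal{B}_{k-1}$, so only finitely many combinations arise, and each either exhibits a PDI-subtree $T_{\pdi}^{\mathcal{O}_i}$ (reduction via Fact~\ref{fact2}), or contradicts $\gtd(T)=\tind(T)$ via Fact~\ref{fact3} and Observation~\ref{ob:PDIb}, or is itself one of the ten branch types. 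The emptiness of $\mathcal{B}_6$ then bounds $\diam(T)\le 6$, leaving a finite list of root-end cases (where the paper's choice of root --- the endpoint whose support vertex has smaller degree --- is also used, a detail absent from your setup). To repair your proposal you would have to replace the analysis at $v_{d-2}$, $v_{d-3}$ by exactly such a depth-indexed classification of all hanging subtrees; at that point you have reconstructed the paper's argument rather than found a shortcut around it.
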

\begin{proof}
We show that if $T$ is a $(\gamma_2,\alpha_2)$-tree, then $T \in \cT$. We proceed by induction on the order $n \ge 1$ of a $(\gamma_2,\alpha_2)$-tree $T$. If $n \le 4$, then $T \in \cT$. This establishes the base case. For the inductive hypothesis, let $n \ge 5$ and assume that every $(\gamma_2,\alpha_2)$-tree of order less than $n$ belongs to the family $\cT$. Let $T$ be a $(\gamma_2,\alpha_2)$-tree of order $n$. We show that $T \in \cT$. If $T$ is a star, then this is immediate since $T$ can be obtained from a path $P_3$ by repeated applications of Operation $\mathcal{O}_1$. Hence, we may assume that $\diam(T) \ge 3$.

We will frequently use the following three facts throughout the remaining proof.

\begin{fact}\label{fact1}
If $T$ contains a set $U$ of vertices such that $T$ can be obtained from the tree $T-U$ by applying Operation $\mathcal{O}_i$ for some $i \in [5]$, then $T \in \cT$.
\end{fact}
\begin{claimproof} Let $U$ be a set of vertices of $T$, and let $T' = T-U$. If $T$ can be obtained from the tree $T'$ by applying Operation $\mathcal{O}_i$ for some $i \in [5]$, then, by Lemma \ref{lemma1}, $\alpha_2(T)-\gamma_2(T) = \alpha_2(T')-\gamma_2(T')$. By supposition, $T$ is a $(\gamma_2,\alpha_2)$-tree, and so $\alpha_2(T)-\gamma_2(T) = 0$. Therefore, $\alpha_2(T')-\gamma_2(T') = 0$, and so $T'$ is a $(\gamma_2,\alpha_2)$-tree. Applying the inductive hypothesis to $T'$, the tree $T' \in \cT$. Since $T$ can be restored by applying Operation $\mathcal{O}_i$ to the tree $T' \in \cT$, the tree $T \in \cT$.
\end{claimproof}

\medskip

As a consequence of Fact \ref{fact1}, we have the following result.

\begin{fact}\label{fact2}
If $T$ contains a PDI-subtree $T_{\pdi}^{\mathcal{O}_i}$ for some $i \in \{1,2,4,5,6\}$, then $T \in \cT$.
\end{fact}

\begin{claimproof} Clearly, the result is true for $i\in\{1,2,4,5\}$ by Fact \ref{fact1}. Hence, we may assume that $T$ contains a PDI-subtree $T_{14}^{\mathcal{O}_6}$. Therefore, by definition, there is a tree $T'$ such that $T$ is obtained by applying Operation $\mathcal{O}_6$ to $T'$. Moreover, by Lemma \ref{lemma1}, $\alpha_2(T')-\gamma_2(T')=\alpha_2(T)-\gamma_2(T)$, and so we conclude that $T'$ is a $(\gamma_2,\alpha_2)$-tree. By induction, $T'\in\cT$, implying $T\in \cT$.
\end{claimproof}

\medskip

\begin{fact}\label{fact3}
Let $U$ be a set of vertices in $T$, and let $T' = T[U]$. Let $D'$ be a $2$-dominating set in $T'$ and let $S'$ be a $2$-independent set in $T'$. If $|D'| < |S'|$ and $\partial(U) \cap S' = \emptyset$, then we have a contradiction to the choice of $T$.
\end{fact}
\begin{claimproof}
Suppose that $|D'| < |S'|$ and $\partial(U) \cap S' = \emptyset$. Every $2$-dominating set of $T-U$ can be extended to a $2$-dominating set of $T$ by adding to it the set $D'$, implying that $\gamma_2(T) \le \gamma_2(T-U)+|D'|$. Since no vertex in $S'$ is adjacent to a vertex in $V(T) \setminus U$, every $2$-independent set in $T-U$ can be extended to a $2$-independent set in $T$ by adding to it the set $S'$. Thus, $\alpha_2(T) \ge \alpha_2(T-U)+|S'|$. By Theorem \ref{t:relate}, $\gamma_2(T-U) \le \alpha_2(T-U)$. Thus, $\alpha_2(T) = \gamma_2(T) \le \gamma_2(T-U)+|D'| \le \alpha_2(T-U)+|D'| < \alpha_2(T-U)+|S'| \le \alpha_2(T)$, a contradiction.
\end{claimproof}

\medskip

We proceed further with the following series of claims.

\begin{claim}\label{c:lem3.1}
If $T$ has a support vertex adjacent to at least three leaves, then $T \in \cT$.
\end{claim}
\begin{claimproof} Let $v$ be a support vertex adjacent to at least three leaves. Let $u$ be an arbitrary leaf adjacent to $v$ and let $U = \{u\}$. Since $T$ can be obtained from the tree $T-U$ by applying Operation $\mathcal{O}_1$ with the vertex $v$ as the attacher in $T-U$, Fact \ref{fact1} implies that $T \in \cT$. \end{claimproof}

\medskip

By Claim \ref{c:lem3.1}, we may assume that every support vertex of $T$ is adjacent to at most two leaves, for otherwise the desired result follows. By our earlier assumptions, $\diam(T) \ge 3$. Let $P$ be a longest path in $T$ and suppose that $P$ is an $(r_1,r_2)$-path. Necessarily, $r_1$ and $r_2$ are leaves in $T$. Renaming $r_1$ and $r_2$ if necessary, we may assume that the degree of the support vertex adjacent to $r_1$ is at most the degree of the support vertex adjacent to $r_2$. We now let $r = r_1$ and root the tree $T$ at the vertex $r$.

We call a vertex of degree at least $2$ in $T$ a {\it large vertex}. Let $\mathcal{L}$ be the set of large vertices in $T$. For each vertex $w \in \mathcal{L}$, let $\ell(w)$ be a leaf at maximum distance from $w$ in $T$ that belongs to the maximal subtree, $T_w$, at $w$. In particular, we note that $w$ belongs to the $(r,\ell(w))$-path.

\begin{claim}\label{c:lem3.2}
If $w \in \mathcal{L}$ and $T_w$ is a PDI-subtree $T_1$ in $T$, then $T \in \cT$.
\end{claim}
\begin{claimproof} Suppose that $w \in \mathcal{L}$ and $D[w]$ induces the PDI-subtree $T_1$. Let $U = D[w]$. In this case, $U$ consists of $w$ and its two children. Since $T$ can be obtained from the tree $T-U$ by applying Operation $\mathcal{O}_3$ with the parent of $w$ in $T$ as the attacher in $T-U$, Fact \ref{fact1} implies that $T \in \cT$.
\end{claimproof}

\medskip

By Claim \ref{c:lem3.2}, we may assume that if $w \in \mathcal{L}$, then $T_w$ is not a PDI-subtree $T_1$ in $T$, for otherwise the desired result follows. We define $\mathcal{B}_0 = \{B_1\}$, $\mathcal{B}_1 = \{B_2\}$, $\mathcal{B}_2 = \{B_3\}$, $\mathcal{B}_3 = \{B_4,B_5,B_6\}$, $\mathcal{B}_4 = \{B_7,B_8\}$, $\mathcal{B}_5 = \{B_9,B_{10}\}$, and $\mathcal{B}_i = \emptyset$ for $i \ge 6$. If $T_{\pdi}$ is a PDI-subtree of $T$ and $T_{\pdi}$ is isomorphic to a tree in the family $\mathcal{B}_i$ for some $i \ge 0$, then we say that $T_{\pdi}$ is a {\it PDI-subtree of $\mathcal{B}_i$ in $T$}.

\begin{claim}\label{c:lem3.3}
\label{claim33}
For every vertex $w \in \mathcal{L}$, the subtree $T_w$ is a PDI-subtree of $\mathcal{B}_{d(w,\ell(w))}$ in $T$ or $T \in \cT$.
\end{claim}
\begin{claimproof}
We proceed by induction on the distance, $d(w,\ell(w))$, from $w$ to the leaf $\ell(w)$. Suppose that $d(w,\ell(w)) = 1$. In this case, every child of $w$ is a leaf. Since every support vertex in $T$ is adjacent to at most two leaves, $T_w$ is a PDI-subtree $B_2$ or $T_1$. By Claim \ref{c:lem3.2}, we have $T_w \cong B_2$, implying that $T_w$ is a PDI-subtree of $\mathcal{B}_1$ in $T$, where $w$ is the white vertex of $B_2$.

\textit{Case 1. Assume that $d(w,\ell(w)) = 2$.}
For each child $z$ of $w$, we note by induction that $T_z$ is a PDI-subtree of $\mathcal{B}_0\cup\mathcal{B}_1$ in $T$. Further, since $d(w,\ell(w)) = 2$, at least one child of $w$, say $x_1$, satisfies $T_{x_1} \cong B_2$. Let $y_1$ be the leaf adjacent to $x_1$. If $d_T(w) = 2$, then $T_w \cong B_3$, implying that $T_w$ is a PDI-subtree of $\mathcal{B}_3$ in $T$, where $w$ is the white vertex of $B_3$. Hence, we may assume that $d_T(w) \ge 3$, for otherwise the desired result follows. Let $x_2$ be a child of $w$ different from $x_1$.

Now suppose that $T_{x_2} \cong B_2$. Let $y_2$ be the leaf adjacent to $x_2$, and let $U = \{w,x_1,x_2$, $y_1, y_2\}$. Let $T' = T[U]$. Then, $D' = \{w,y_1,y_2\}$ is a $2$-dominating set in $T'$ and $S' = \{x_1,x_2,y_1,y_2\}$ is a $2$-independent set in $T'$. Since $|D'| < |S'|$ and $\partial(U) \cap S' = \emptyset$, we have a contradiction by Fact \ref{fact3}. Hence, $T_{x_2} \cong B_1$. We now let $U = \{x_2\}$ and note that $T[\{w,x_1,y_1\}] \cong T_2$ is a PDI-subtree of $T-U$. Thus, $T$ contains a PDI-subtree $T_2^{\mathcal{O}_1}$ with the vertex $w$ as the attacher in this subtree, and so, by Fact \ref{fact2}, $T \in \cT$.

\textit{Case 2. Assume that $d(w,\ell(w)) = 3$.}
For each child $z$ of $w$, we note by induction that $T_z$ is a PDI-subgraph of $\mathcal{B}_0\cup\mathcal{B}_1\cup\mathcal{B}_2$ in $T$. Further, since $d(w,\ell(w)) = 3$, at least one child of $w$, say $x_1$, satisfies $T_{x_1} \cong B_3$. If $d_T(w) = 2$, then $T_w$ is a PDI-subtree $B_4 \in \mathcal{B}_3$ in $T$ where $w$ is the white vertex of $B_4$. Hence, we may assume that $d_T(w) \ge 3$, for otherwise the desired result follows. If $w$ has a child $x_2$ such that $T_{x_2}$ is a PDI-subtree $B_2$ in $T$ where $x_2$ is the white vertex of $B_2$, then let $U = D[x_1]$. Now $T[D(x_2) \cup \{w\}] \cong T_2$ is a PDI-subtree of $T-U$. Therefore, $T$ contains a PDI-subtree $T_2^{\mathcal{O}_3}$ with the vertex $w$ as the attacher in this subtree, and so, by Fact \ref{fact2}, $T \in \cT$. Thus for each child $x$ of $w$ we have that $T_z$ is a PDI-subtree $B_1$ or $B_3$.

Now assume that $w$ has at least three children, say $x_1$, $x_2$ and $x_3$. If $T_{x_2}$ and $T_{x_3}$ are two PDI-subtrees $B_1$, then let $U = D[x_1]$. Now $T[D(x_2) \cup D(x_3) \cup \{w\}] \cong T_1$ is a PDI-subtree of $T-U$. Thus, $T$ contains a PDI-subtree $T_1^{\mathcal{O}_4}$ with the vertex $w$ as the attacher in this subtree, and so, by Fact \ref{fact2}, $T \in \cT$. If $T_{x_2}$ and $T_{x_3}$ are two PDI-subtrees such that $T_{x_2} \cong B_1$ and $T_{x_3} \cong B_3$, or $T_{x_2},T_{x_3} \cong B_3$, then, by defining $U = D[x_1]$, $T[D[x_2] \cup D[x_3] \cup \{w\}]$ is a PDI-subtree $T_5$ or $T_{10}$, respectively. Now $T$ contains a PDI-subtree $T_5^{\mathcal{O}_4}$ or $T_{10}^{\mathcal{O}_4}$ with the vertex $w$ as the attacher in this subtree, and so, by Fact \ref{fact2}, $T \in \cT$. Therefore, $w$ has degree $3$ in $T$ and there are at most two children $x_1$ and $x_2$ such that $T_{x_1}$ and $T_{x_2}$ are PDI-subtrees satisfying $T_{x_1} \cong B_3$, and $T_{x_2} \cong B_1$ or $T_{x_2} \cong B_3$. Hence, $T_w \cong B_5 \in \mathcal{B}_3$ or $T_w \cong B_6 \in \mathcal{B}_3$, where $w$ is the white vertex in both cases. Hence, $T_w$ is a PDI-subtree of $\mathcal{B}_3$ in $T$.

\textit{Case 3. Assume that $d(w,\ell(w)) = 4$. }
For each child $z$ of $w$, we note by induction that $T_z$ is a PDI-subtree of $\mathcal{B}_0\cup\mathcal{B}_1\cup\mathcal{B}_2 \cup \mathcal{B}_3$ in $T$. Further, since $d(w,\ell(w)) = 4$, at least one child of $w$, say $x_1$, satisfy that $T_{x_1}$ is isomorphic to $B_4,B_5$, or $B_6$.

Assume that $d_T(w) = 2$. If $T_{x_1} \cong B_4$, then $T_w \cong B_7$, implying that $T_w$ is a PDI-subtree of $\mathcal{B}_4$ in $T$, where $w$ is the white vertex of $B_7$. Therefore, $T_w \in \{B_5,B_6\}$. Further, since $w \in \mathcal{L}$, we may assume that $w$ is the child of $y$, and that $z$ is a child of $x_1$ of degree $2$. We note that $z$ has distance $2$ to a leaf in $T_{x_1}$. Now let $U = D[z]$. Depending on whether $T_{x_1} \cong B_5$ or $T_{x_1} \cong B_6$, $T_y-U$ is a PDI-subtree $T_3$ or $T_7$, respectively. Thus, $T$ contains a PDI-subtree $T_3^{\mathcal{O}_4}$ or $T_7^{\mathcal{O}_4}$ with the vertex $x_1$ as the attacher in this subtree, and so, by Fact \ref{fact2}, $T \in \cT$. Hence, we may assume that $d_T(w) \ge 3$, for otherwise the desired result follows. Therefore, let $x_2$ be a child of $w$ different from $x_1$.

Recall, for any child $z$ of $w$, $T_z$ is a PDI-subtree of $\mathcal{B}_0 \cup \mathcal{B}_1 \cup \mathcal{B}_2 \cup \mathcal{B}_3$ in $T$, and $T_{x_1}$ is a PDI-subtree of $\{B_4,B_5,B_6\}$.

If there is a child, renaming vertices if necessary, say $x_2$, of $w$ such that $T_{x_2}$ is a PDI-subtree of \linebreak$\{B_2,B_4,B_5,B_6\}$ in $T$, then let $U = \{w\} \cup D[x_1] \cup D[x_2]$. By a simple case analysis and Observation \ref{ob:PDIb}, one can readily observe that there is a $2$-dominating set $D'$ and a $2$-independent set $S'$ in $T[U]$ such that $|D'| < |S'|$ and $\partial(U) \cap S' = \emptyset$. Therefore, we have a contradiction by Fact \ref{fact3}, implying that there is a child, say $x_2$, distinct from $x_1$ such that $T_{x_2}$ is a PDI-subtree of $\{B_1,B_3\}$ in $T$.

Assume that $T_{x_1} \cong B_4$. If $T_{x_2} \cong B_3$, then let $U = D[x_2]$. Thus, $T[D[x_1] \cup \{w\}]$ is a PDI-subtree $T_6$, implying that $T$ contains a PDI-subtree $T_6^{\mathcal{O}_4}$ with the vertex $w$ as the attacher in this subtree, and so, by Fact \ref{fact2}, $T \in \cT$. Therefore, any child of $w$, distinct from $x_1$, is a leaf. If $w$ has at least three children, say $x_1$, $x_2$, and $x_3$, then let $U = \{x_3\}$. Now $T[D[x_1] \cup \{x_2,w\}]$ is a PDI-subtree $T_8$. Hence, $T$ contains a PDI-subtree $T_8^{\mathcal{O}_1}$ with the vertex $w$ as the attacher in this subtree, and so, by Fact \ref{fact2}, $T \in \cT$. Therefore, $w$ has two children, namely $x_1$ and a leaf $x_2$. Further, let $U$ be the set of those two vertices in $D(x_1)$ which have largest and second largest distance to $x_1$ in $T$. Now $T[(D[x_1]\setminus U) \cup \{x_2,w\}]$ is a PDI-subtree $T_4$ implying that $T$ contains a PDI-subtree $T_4^{\mathcal{O}_2}$, and so, by Fact \ref{fact2}, $T \in \cT$.

Assume that $T_{x_1} \cong B_5$. If $T_{x_2} \cong B_3$, then let $U = D(x_2)$ implying that $T[D[x_1] \cup \{x_2,w\}]$ is a PDI-subtree $T_{13}$ in $T$. Therefore, $T$ contains a PDI-subtree $T_{13}^{\mathcal{O}_2}$ and so, by Fact \ref{fact2}, $T \in \cT$. Hence, any child of $w$, distinct from $x_1$, is a leaf. If $w$ has at least three children, say $x_1$, $x_2$, and $x_3$, then let $U = V(T_w)$. By Observation \ref{ob:PDIb}, one can readily observe that there is a $2$-dominating set $D'$ and a $2$-independent set $S'$ in $T[U]$ such that $|D'| < |S'|$ and $\partial(U) \cap S' = \emptyset$, implying a contradiction by Fact \ref{fact3}. Therefore, $w$ has only two children, say $x_1$ and $x_2$, for otherwise the desired result follows. Now $T_w\cong B_8$ and so $T_w$ is a PDI-subtree of $\mathcal{B}_4$ in $T$, where $w$ is the white vertex of $B_8$.

Assume that $T_{x_1} \cong B_6$. If $T_{x_2} \cong B_1$, then let $U$ be the set of vertices consisting of $\ell(x_1)$ and its support vertex. Now $T[(D[x_1]\setminus U) \cup \{x_2,w\}]$ is a PDI-subtree $T_{12}$ in $T$. Therefore, $T$ contains a PDI-subtree $T_{12}^{\mathcal{O}_2}$ and so, by Fact \ref{fact2}, $T \in \cT$. If $T_{x_2} \cong B_3$, then let $U = D(x_1) \cup D[x_2]$. Trivially, $T[U]$ is $3P_3$. Therefore, any $\gamma_2(T[U])$-set $D'$ contains all leaves of this forest. Furthermore, any $\gamma_2(T-U)$-set $D^*$ contains $x_1$ since it is a leaf in $T-U$. Moreover, there is an $\alpha_2(T[U])$-set $S'$ such that the leaves in $T[U]$, which are joined by bridges to $x_1$ or $x_2$, are not in $S'$. Therefore, $S' \cup S^*$ is a $2$-independent set for any $\alpha_2(T-U)$-set $S^*$. Furthermore, $D' \cup (D^*\setminus\{x_1\})$ is a $2$-dominating set, since $x_1$ and $w$ are dominated by two vertices of $D'$ and one vertex of $D'$ and $D^*$, respectively. Therefore, since $\gamma_2(T-U)\leq \alpha_2(T-U)$, by Theorem \ref{t:relate}, $\gamma_2(T)\leq \gamma_2(T-U)+5 < \alpha_2(T-U)+6\leq  \alpha_2(T)$, contradicting the choice of $T$.

\textit{Case 4. Assume that $d(w,\ell(w)) = 5$.}
For each child $z$ of $w$, we note by induction that $T_z$ is a PDI-subgraph of $\mathcal{B}_0\cup\mathcal{B}_1\cup\mathcal{B}_2\cup\mathcal{B}_3\cup\mathcal{B}_4$ in $T$. Further, since $d(w,\ell(w)) = 5$, at least one child of $w$, say $x_1$, satisfies that $T_{x_1}$ is isomorphic to $B_7$ or $B_8$.

Firstly, assume that $x_1$ has degree $2$. If $T_{x_1} \cong B_7$, then $T_w\cong B_9 \in \mathcal{B}_5$, implying that $T_w$ is a PDI-subtree of $\mathcal{B}_4$ in $T$, where $w$ is the white vertex of $B_9$. Therefore, $T_{x_1} \cong B_8$. Let $U = D[x_1]$. Now there is a $2$-dominating set $D'$ and a $2$-independent set $S'$ in $T[U]- x_1$ such that $|D'| < |S'|$. Further, we note that $w$ is a leaf in $T-U$. Therefore, there is a $\gamma_2(T_U)$-set $D^*$ and an $\alpha_2(T_U)$-set $S^*$ containing $w$. Clearly, $S^*\cup S'$ is a $2$-independent set in $T$ and $D^*\cup D'$ is a $2$-dominating set for $T-x_1$ by definition. Moreover, $x_1$ is dominated by its leaf neighbor and the vertex $w$, implying that $D^*\cup D'$ is a $2$-dominating set in $T$. Since $|D^*| \le |S^*|$, by Theorem \ref{t:relate}, $\gamma_2(T) \le |D^*\cup D'|= |D^*|+|D'| <|S^*|+|S'| =|S^*\cup S'| \le \alpha_2(T)$, a contradiction to the choice of $T$. Hence, we may assume that $d_T(w) \ge 3$, for otherwise the desired result follows. Therefore, let $x_2$ be a child of $w$ different from $x_1$.

Assume that $T_{x_1} \cong B_7$. Let $U = D[x_1] \cup D[x_2] \cup \{w\}$. By some simple case analysis and Observation \ref{ob:PDIb}, one can readily observe that there is a $\gamma_2(T[U])$-set $D'$ and an $\alpha_2(T)$-set $S'$ such that $|D'| < |S'|$ and $\partial(U) \cap S' = \emptyset$ for $T_{x_2}\in \mathcal{B}_5$, a contradiction by Fact \ref{fact3}.

Assume $T_{x_1} \cong B_8$. If $T_{x_2}$ is a PDI-subtree of $\{B_2,B_4,B_5,B_6,B_7,B_8\}$ in $T$, then let $U = \{w\} \cup D[x_1] \cup D[x_2]$. Again, by some simple case analysis and Observation \ref{ob:PDIb}, one can readily observe that there is a $\gamma_2(T[U])$-set $D'$ and an $\alpha_2(T)$-set $S'$ such that $|D'| < |S'|$ and $\partial(U) \cap S' = \emptyset$, a contradiction by Fact \ref{fact3}. If $T_{x_2} \cong B_3$, then let $U = D[x_2]$. Now $T[D[x_1] \cup \{w\}]$ is a PDI-subtree $T_{14}$. Hence, $T$ contains a PDI-subtree $T_{14}^{\mathcal{O}_4}$ with the vertex $w$ as the attacher in this subtree, and so, by Fact \ref{fact2}, $T \in \cT$. It remains to consider the case that all children of $w$ different from $x_1$ are leaves of $T$. Recall that any support vertex is adjacent to at most two leaves. Further, if $w$ is adjacent to exactly one leaf, then $T_w\cong B_{10}$, implying that $T_w$ is a PDI-subtree of $\mathcal{B}_5$ in $T$, where $w$ is the white vertex of $B_{10}$. On the other hand, if $w$ is adjacent to two leaves, then let $U$ be the set containing $\ell(w)$ and its support vertex. Now $T[D[w]-U]$ is a PDI-subtree $T_{15}$ in $T$. Therefore, $T$ contains a PDI-subtree $T_{15}^{\mathcal{O}_2}$, and so, by Fact \ref{fact2}, $T \in \cT$.

\textit{Case 5. Assume that $d(w,\ell(w)) = 6$.}
 For each child $z$ of $w$, we note by induction that $T_z$ is a PDI-subtree of $\mathcal{B}_0\cup\mathcal{B}_1 \cup \mathcal{B}_2 \cup \mathcal{B}_3 \cup \mathcal{B}_4 \cup \mathcal{B}_5$ in $T$. Further, since $d(w,\ell(w)) = 6$, at least one child of $w$, say $x_1$, satisfies that $T_{x_1}$ is isomorphic to $B_9$ or $B_{10}$.

Assume that $T_{x_1} \cong B_9$. If $w$ has degree $2$, then let $U = D[x_1]$. Now there is a $2$-dominating set $D'$ and a $2$-independent set $S'$ in $T[U-{x_1}]$ such that $|D'| \le |S'|$. Moreover, since $w$ is a leaf in $T-U$, there is a $\gamma_2(T-U)$-set $D^*$ in $T-U$ and an $\alpha_2(T)$-set $S^*$ containing $w$. Hence, $x_1$ is dominated by a vertex of $D'$ and a vertex of $D^*$. Therefore, $D' \cup D^*$ is $2$-dominating in $T$ and $S' \cup S^*$ is $2$-independent in $T$. It follows, by Theorem \ref{t:relate}, $\gamma_2(T) \le |D'\cup D^*|=|D'|+|D^*| <|S'|+|S^*|=|S' \cup S^*| \le \alpha_2(T)$, a contradiction. Hence, there is a child of $w$, say $x_2$, distinct from $x_1$. If $T_{x_2}$ is a PDI-subtree of $\{B_2,B_4,B_5,B_6,B_6,B_7,B_8,B_9\}$ in $T$, then let $U = D[x_1] \cup D[x_2] \cup \{w\}$. By Observation \ref{ob:PDIb}, one can readily observe that in all cases there is a $2$-dominating set $D'$ and a $2$-independent set $S'$ in $T[U]$ such that $|D'| < |S'|$ and $\partial(S') \cap U = \emptyset$, a contradiction by Fact \ref{fact3}. If $T_{x_2} \cong B_1$, then let $U$ be the set of three vertices consisting of $\ell(x_1)$, its support vertex and $x_2$. Then $T[(D[x_1] \cup \{w\}) \setminus U]$ is a PDI-subtree $T_6$ in $T$. Therefore, $T$ contains a PDI-subtree $T_6^{\mathcal{O}_5}$, and so, by Fact \ref{fact2}, $T \in \cT$. If $T_z \cong B_3$, then let $U = D[x_2]$. Now $T[D[x_1] \cup \{w\}]$ is a PDI-subtree $T_9$ in $T$. Again, $T \in \cT$, by Fact \ref{fact2} and since $T$ contains a PDI-subtree $T_9^{\mathcal{O}_4}$.

Assume that $T_{x_1} \cong B_{10}$. If $w$ has degree $2$, then let $y$ be the parent of $w$. Now $D[w] \cup \{y\}$ induces a PDI-subtree $T_{14}^{\mathcal{O}_6}$, and so $T \in \cT$ by Fact \ref{fact2}. If $w$ has degree at least $3$, then there is a child of $w$, say $x_2$, distinct from $x_1$. Further, $T_{x_2}$ is a PDI-subtree of $\{B_1,B_2,\ldots ,B_{10}\}$ in $T$. Let $U = D[x_1] \cup D[x_2] \cup \{w\}$. By Observation \ref{ob:PDIb}, one can readily observe that there is a $2$-dominating set $D'$ and a $2$-independent set $S'$ in $T-U$ such that $|D'| < |S'|$ and $\partial(S') \cap U = \emptyset$, a contradiction by Fact \ref{fact3}.

Note that the proof for $d(w,\ell(w)) = 6$ immediately implies that $\mathcal{B}_i = \emptyset$ for $i \ge 6$. This completes the proof of Claim \ref{c:lem3.3}
\end{claimproof}

\medskip

We now return to the proof of Lemma \ref{lem3} for the last time. Recall that $T$ is a $(\gamma_2,\alpha_2)$-tree. Further, let $w$ be the support vertex of $r$. By Claim \ref{claim33}, we deduce that $T_w$ is a PDI-subtree of $\mathcal{B}_{d(w,\ell(w))}$ in $T$ or $T \in \cT$. Since the desired result follows in the latter case, we assume that $T_w$ is a PDI-subtree of $\mathcal{B}_{d(w,\ell(w))}$. Moreover, $d(w,\ell(w)) \le 5$ since $\mathcal{B}_i = \emptyset$ for $i \ge 6$. Further, $r$ is an end-vertex of a diametrical path which implies that $\diam(T) \le 6$. Now let $T_w \cong B_i$. Then, all vertices in $V(T) \setminus V(T_w)$ are leaves adjacent to $w$. Note that there is at least one leaf, namely the vertex $r$, and, by Claim \ref{c:lem3.1}, there are at most two leaves. If $T_w \cong B_1$ or $T_w \cong B_2$, then $T$ is a star, which can be obtained by applying Operation $\mathcal{O}_1$ to a path $P_3$. If $T_w$ is a PDI-subtree of $\{B_3,B_4,B_7,B_9\}$ in $T$, then, by the choice of $r$, we have exactly one leaf outside $V(T_w)$ being adjacent to $w$. Thus, $T$ is a path. Depending on the length of the path, we have $T \in \cT$ or $\gamma_2(T) < \alpha_2(T)$ which gives the result or contradicts the choice of $T$, respectively. If $T_w$ is a PDI-subtree of $\{B_5,B_8,B_{10}\}$ in $T$, then the degree of $w$ is at least $3$, but the support vertex of $\ell(w)$ has degree $2$, a contradiction to the choice of $r_1$. If $T_w \cong B_6$, then we have a contradiction since $r$ does not lie on a diametrical path.
\end{proof}

As an immediate consequence of Lemmas \ref{lem2} and \ref{lem3}, we obtain the following characterization of trees with equal $2$-domination and $2$-independence numbers.
\medskip 

\noindent \textbf{Theorem~\ref{t:tree}.} \emph{A tree is a $(\gtd,\tind)$-tree if and only if $T \in \mathcal T$.}

\begin{proof}
Let $T$ be a tree. If $T \in \cT$, then by Lemma~\ref{lem2}, $T$ is a $(\gtd,\tind)$-tree. This establishes the sufficiency. If $T$ is a $(\gtd,\tind)$-tree, then, by Lemma~\ref{lem3}, we have $T \in \mathcal T$. This proves the necessity.
\end{proof}

\nocite{*}

\label{sec:biblio}

\end{document}